\definecolor{linkGreen}{RGB}{175,0,0}
\definecolor{linkblue}{RGB}{0, 102, 204} 
\newtheorem{theorem}{Theorem}
\newtheorem{lemma}[theorem]{Lemma}
\newtheorem{claim}{Claim}[theorem]
\newtheorem{observation}[theorem]{Observation}
\newcommand{\sic}{\xrightarrow{\cap}}
\DeclareMathOperator{\tw}{tw}
\newcommand{\cB}{\mathcal{B}}
\newcommand{\cX}{\mathcal{X}}
\newcommand{\cY}{\mathcal{Y}}
\newcommand{\cS}{\mathcal{S}}
\newcommand{\bN}{\mathbb{N}}
\newcommand{\Free}{\mathrm{Free}} 
\newcommand{\SubgraphFree}{\mathrm{Subgraph\text{-}Free}} 
\newcommand{\SiFree}{\mathrm{Si\text{-}Free}} 
\newcommand{\mwis}{\textsc{Maximum Weight Independent Set}\xspace}
\newcommand{\mmaybewis}{\textsc{Maximum (Weight) Independent Set}\xspace}
\newcommand{\mis}{\textsc{Maximum Independent Set}\xspace}
\newcommand{\mim}{\textsc{Maximum Induced Matching}\xspace}
\newcommand{\sat}{\textsc{Satisfiability}\xspace}
\newcommand{\csat}{\textsc{Counting Satisfiability}\xspace}
\newcommand{\maybecsat}{\textsc{(Counting) Satisfiability}\xspace}
\title{Graph Classes Closed under Self-intersection}
	\author{ }
\author{Konrad K. Dabrowski \\ {\small Newcastle University, UK}
	\and Vadim Lozin  \\ {\small University of Warwick, UK}
        \and Martin Milani{\v c} \\ {\small University of Primorska, Slovenia}
        \and Andrea Munaro \\ {\small University of Parma, Italy}
        \and Dani\"el Paulusma  \\ {\small Durham University, UK}
	\and Viktor Zamaraev  \\ {\small University of Liverpool, UK}}
\date{}
\begin{document}

\maketitle
\begin{abstract}
A graph class is monotone if it is closed under taking subgraphs. It is well known that a monotone class defined by finitely many obstructions has bounded treewidth if and only if one of the obstructions is a so-called tripod, that is, a disjoint union of trees with exactly one vertex of degree~$3$ and paths. This dichotomy also characterizes exactly those monotone graph classes for which many \textup{\textsf{NP}}-hard algorithmic problems admit polynomial-time algorithms.  These algorithmic dichotomies, however, do not extend to the universe of all hereditary classes, which are classes closed under taking induced subgraphs. 
This leads to the natural question of whether we can extend known algorithmic dichotomies for monotone classes to larger families of hereditary classes.

In our paper we give an affirmative answer to this question by considering the family of hereditary graph classes that are closed under self-intersection. 
This family is known to be located strictly between the monotone and hereditary classes.
We show the following:
\begin{itemize}
\item We prove a new structural characterization of graphs in self-intersection-closed classes excluding a tripod.
In contrast to monotone classes excluding a tripod, these classes do not necessarily have bounded treewidth; in fact, they do not even need to be sparse.
\item
We use our characterization to give a complete dichotomy of \mis, and its weighted variant, for self-intersection-closed classes defined by finitely many obstructions: these problems are in {\sf P} if the class excludes a tripod and \textsf{NP}-hard otherwise. 
Our dichotomy generalizes several known results on \mis in the literature. 
\item
We also apply our characterization to obtain a dichotomy for \mim on self-intersection-closed classes of bipartite graphs defined by finitely many obstructions, and for \sat and \csat on  self-intersection-closed classes of (bipartite) incidence graphs defined by finitely many obstructions. 
\item We use our characterization to obtain a dichotomy for boundedness of clique-width for  self-intersection-closed classes of bipartite graphs defined by finitely many obstructions.
\end{itemize}
\end{abstract}

\newpage

\section{Introduction}
The celebrated project of Robertson and Seymour on graph minors produced a variety of fundamental results of both a structural and an algorithmic nature, and culminated in the proof of Wagner's conjecture stating that the minor relation on graphs is a well-quasi-order. 
This implies that every minor-closed class can be characterized by a finite set of minimal forbidden minors, and any family of classes in this universe, which is closed under taking subclasses, can be characterized by minimal classes not in the family. 
For instance, within the universe of minor-closed classes, the family of classes of bounded treewidth can be characterized by a unique ``obstruction'', i.e.~a unique minimal minor-closed class of unbounded treewidth, namely the class of planar graphs. 
What is interesting is that the family of classes of bounded treewidth coincides with the family of minor-closed classes where many \textup{\textsf{NP}}-hard algorithmic problems, such as \mis or \mim, admit polynomial-time algorithms (unless $\textsf{P} = \textup{\textsf{NP}}$).
The same is true for \sat represented by incidence graphs of CNF instances, i.e.~the problem is \textup{\textsf{NP}}-complete for planar incidence graphs \cite{MR1287974} and can be solved in polynomial time for incidence graphs of bounded treewidth \cite{DBLP:journals/cj/GottlobS08}. In other words, in the universe of minor-closed classes polynomial-time solvability of many \textup{\textsf{NP}}-hard algorithmic problems is equivalent to boundedness of treewidth.

These dichotomies, however, do not extend to the more general family of hereditary classes, i.e.~classes closed under taking induced subgraphs. 
The induced subgraph relation is not a well-quasi-order, it contains infinite antichains, e.g.~cycles. 
As a result, in this universe there exist classes characterized by infinitely many minimal forbidden induced subgraphs, and there exist families of classes with no minimal classes outside of the families. 
On the other hand, dichotomies 
are still possible for hereditary classes if we restrict ourselves to \textsl{finitely-defined} classes, i.e.~classes defined by finitely many forbidden induced subgraphs. 
In this case, we relax the notion of minimal classes to the notion of boundary classes, i.e.~minimal \textsl{limit} classes. 
Together, minimal and boundary classes are known as \textsl{critical classes}, or \textsl{critical properties}, and they enable a characterization of any family~$\cal F$ of finitely-defined hereditary classes that is closed under taking subclasses, in terms of critical properties: a class $X$ belongs to $\cal F$ if and only if it does not contain any critical class for $\cal F$. 
For instance, for the family of finitely-defined classes of bounded treewidth, there exist precisely four critical classes \cite{LR22}. Two of them, complete graphs and complete bipartite graphs, are minimal hereditary classes of unbounded treewidth, and the other two are boundary classes: the class $\cal S$ of forests in which every connected component is a tree with at most three leaves, and the class of line graphs of the graphs in $\cal S$. 

Every connected graph in $\cal S$ with three leaves is a subdivision of the claw $K_{1,3}$ and such graphs are known under various names, e.g.~subdivided claw, long claw and tripod. 
With slight abuse of terminology, we will refer to any graph in $\cal S$ as a \emph{tripod}. 
The class $\cal S$ is critical not only for graphs of bounded treewidth, but also for many algorithmic problems, including \mis \cite{MR2024261}, \mim \cite{MR2363374} and \sat \cite{MR3033644}. In other words, if a finitely-defined class $X$ contains $\cal S$, then \mis, \mim, and \sat are \textup{\textsf{NP}}-hard in $X$.
It was conjectured in \cite{MR3695266} that for the \mis problem, $\cal S$ is the only critical class, i.e.~by forbidding any graph from $\cal S$ as an induced subgraph we obtain a class where the problem can be solved in polynomial time. However, this has so far been confirmed only for a few very special tripods with small connected components (see, e.g.~\cite{GKPP22, BLM25}).
Recently, substantial progress has been made by proving weaker versions of the conjecture, such as in the setting of weakly sparse graph classes (i.e.~hereditary classes excluding both a clique and a biclique) \cite{ACPR24}, and by establishing quasi-polynomial-time solvability of \mis in classes that exclude a tripod \cite{MR4764849}.
Despite this progress, the conjecture remains out of reach to date. Under these circumstances, it is natural to restrict the universe to a family of classes between minor-closed and hereditary. An important family of this type is that of monotone classes.

A class of graphs is {\it monotone} if it is closed under taking subgraphs, not necessarily induced. 
The subgraph relation is much richer than the induced subgraph relation, but it still contains infinite antichains, which is bad news. 
The good news is that it contains a \textsl{canonical} infinite antichain, i.e.~an antichain that is unique in the sense that a monotone class is well-quasi-ordered under the subgraph relation if and only if it contains finitely many graphs from the canonical antichain~\cite{MR2493532}.
This antichain consists of the minimal forbidden (induced) subgraphs for the class  $\cal S$, which emphasizes the importance of $\cal S$ as a critical class. 

In the world of monotone classes, $\cal S$ is the unique critical class for many families, including classes of bounded treewidth and classes with polynomial-time solvable \mis, \mim and \sat problems (unless $\textsf{P} = \textup{\textsf{NP}}$); see also \cite{MR4867975} for many other algorithmic problems of this type. 
Therefore, we again find ourselves in the situation where polynomial-time solvability of the problems coincides with boundedness of treewidth, similarly to the case for minor-closed classes. 
This situation suggests that we step back and search for universes of classes that are strictly between monotone and hereditary classes. 
Is there life between these two worlds? 

To answer the last question, we observe that any monotone class that excludes a graph $G$ also excludes all graphs containing $G$ as a subgraph. 
But what if we exclude \textsl{only some} of these graphs?
One specific family of such classes was proposed in \cite{MR3812542} under the name \textsl{self-intersection-closed} classes. 
Informally, we say that a graph $H$ is a \emph{self-intersection} of a graph $G$ if $H$ is the intersection of a collection of isomorphic copies of $G$ (formal definitions will be given later). 
By definition, a self-intersection of $G$ is isomorphic to a subgraph of $G$, but in general not every subgraph of $G$ can be obtained in this way.
To distinguish between the two notions, we will refer to a self-intersection of a graph~$G$ as an \emph{si-subgraph} of $G$. 
A class of graphs is closed under self-intersection if for every graph $G$ the class contains all si-subgraphs of $G$. 
As we shall see later, the family of classes closed under self-intersection is located strictly between monotone and hereditary classes. 
Every class in this family can be characterized by a set of minimal forbidden si-subgraphs. 

\medskip
\noindent
{\bf Our Contribution.} 
In this paper, we focus on classes defined by forbidding graphs in $\mathcal{S}$ as si-subgraphs. 
We consider the following classical algorithmic problems as our testbed problems:
\begin{enumerate}
    \item \mwis: Given a graph $G = (V, E)$ and a weight function $w\colon V \to \mathbb{Q}_{\geq 0}$, the \mwis problem asks to find an independent set $I \subseteq V$ that maximizes the total weight $w(I) = \sum_{v \in I} w(v)$.
    The special case of the \mwis problem when the weight function $w$ is constantly equal to $1$ is the \mis problem.
    
    \item \mim: Given a graph $G = (V, E)$, an \emph{induced matching} is a set of edges $M \subseteq E$ such that no two edges in $M$ share an endpoint, and no edge in $E \setminus M$ connects two edges in $M$. The \mim problem asks to find an induced matching of maximum cardinality.

    \item \sat: Given a Boolean formula $\varphi$ in conjunctive normal form (CNF), the \sat problem asks whether there exists a truth assignment to the variables of $\varphi$ that makes the formula evaluate to true.

    \item \csat: Given a Boolean formula $\varphi$ in CNF, the \csat problem asks for the total number of satisfying truth assignments that make $\varphi$ evaluate to true.
\end{enumerate}

\noindent
We first focus on \mis and \mwis.
It was shown in \cite{MR3812542} that any self-intersection-closed class excluding a path~$P_k$ admits a polynomial-time algorithm for the \mis problem. 
Further progress towards the polynomial-time solvability of \mis in self-intersection-closed graphs excluding a connected tripod was obtained in~\cite{Sor23}, where a polynomial-time algorithm was developed under the additional assumption that the input graph has a vertex with co-degree bounded above by the logarithm of the number of vertices.

In the present paper, we extend these results to a {\it full} algorithmic dichotomy for the \mis and \mwis problems on classes defined by finitely many forbidden si-subgraphs. 
It is known~\cite{AK92} that a monotone class defined by finitely many forbidden subgraphs admits a polynomial-time algorithm for the \mis problem if and only if it excludes a graph from $\cS$ (unless $\textsf{P}=\textup{\textsf{NP}}$), and the same holds for the \mwis problem. 
We show that exactly the same dichotomies hold for \mis and \mwis on self-intersection-closed classes defined by finitely many forbidden si-graphs. Note, however, that in the case of monotone classes, excluding a tripod is equivalent to boundedness of treewidth~\cite{RS84}, whereas self-intersection-closed classes excluding a tripod can contain arbitrarily large complete graphs (see also \Cref{th:tSttt-main}).  

\begin{sloppypar}
The polynomial part of our dichotomy is based on a structural characterization of self-intersection-closed classes excluding a tripod, which is presented in Section~\ref{sec:no-long-claws}. 
In Section~\ref{sec:dichotomies}, we use this characterization to obtain the same dichotomy for \mim on classes of bipartite graphs defined by finitely many forbidden si-subgraphs, and for \sat and \csat on classes of (bipartite) incidence graphs defined by finitely many forbidden si-subgraphs, as 
described in Section~\ref{sec:dichotomies}.
In the same section, we also obtain a dichotomy for boundedness of clique-width for classes of bipartite graphs defined by finitely many forbidden si-subgraphs.   
\end{sloppypar}

\medskip
\noindent
{\bf Future Directions.}
Our paper not only extends some of the previously known results, but also opens a variety of new research directions. 
First, it is natural to explore other algorithmic problems for which the tripods constitute the only critical property in the universe of monotone classes and determine whether the structural characterization of self-intersection-closed classes excluding a tripod can be applied to develop efficient algorithms for such problems.

Second, we ask whether the dichotomy for classes of bipartite graphs of bounded clique-width defined by finitely many forbidden si-subgraphs can be extended to \textsl{all} classes defined by finitely many forbidden si-subgraphs.
We conjecture that this is not possible, i.e.~there exist classes excluding a tripod as an si-subgraph of unbounded clique-width. 
On the other hand, we believe that all classes excluding a tripod as an si-subgraph have bounded twin-width.

It would also be interesting to extend the dichotomy for \mim on classes of bipartite graphs defined by finitely many forbidden bipartite si-subgraphs to \textsl{all} classes defined by finitely many forbidden si-subgraphs, as well as to the weighted case of the problem.

One more important research direction is the complexity of \textup{\textsf{NP}}-hard algorithmic problems in classes defined by infinitely many forbidden si-subgraphs. 
For problems for which $\cal S$ is a critical class, polynomial-time solvability can only be obtained by forbidding a graph from $\cal S$ or by forbidding infinitely many graphs from the canonical antichain defining $\cS$ (unless $\textsf{P}=\textup{\textsf{NP}}$). 
This antichain consists of the cycles and the so-called $H$-graphs (see Figure~\ref{fig:Hk}). 
Therefore, one of the natural next steps is exploring the structure of self-intersection-closed classes excluding infinitely many cycles or $H$-graphs.
Finally, it would be interesting to discover other families of classes intermediate between monotone and hereditary classes.

\section{Preliminaries}

Let $G=(V,E)$ be a graph. 
We let $V(G)$ and $E(G)$ denote the vertex set and the edge set of $G$, respectively.
For a vertex $v \in V$, we let $N_G(v)$ denote the set of neighbours of $v$ in $G$, and let $\deg_G(v)$ denote the degree of $v$ in $G$, i.e.~the cardinality of $N_G(v)$. When $G$ is clear from the context, we simply write $N(v)$ and $\deg(v)$.
For a set $U \subseteq V$, we let $G[U]$ denote the subgraph of $G$ induced by $U$ and let $G-U$ denote the subgraph of $G$ induced by $V \setminus U$, i.e.~$G-U$ is obtained from $G$ by removing the vertices in $U$.
If $U$ consists of a single vertex $u$, we write $G - u$ instead of $G-U$ for brevity.
A set of pairwise adjacent (respectively, non-adjacent) vertices in $G$ is called a \emph{clique} (respectively, an \emph{independent set}) of $G$.
A (possibly empty) set $U \subseteq V$ is a \emph{cutset} in $G$ if the graph $G-U$ is disconnected and a \emph{clique cutset} if $U$ is both a clique and a cutset.
A \emph{cut-vertex} in a connected graph $G$ is a vertex $v$ such that $\{v\}$ is a cutset.
A graph is \emph{biconnected} if it is connected and has no cut-vertices.
A graph $H=(V,E')$ is a \emph{spanning subgraph} of $G$ if $E' \subseteq E$, and $H$ is a \emph{spanning supergraph} of $G$ if $E' \supseteq E$.

For two disjoint sets $A,B\subseteq V$, we say that $A$ and $B$ are \emph{complete} to each other (in $G$) if every vertex in $A$ is adjacent to every vertex in $B$.
Similarly, $A$ and $B$ are \emph{anticomplete} to each other if no vertex in $A$ is adjacent to a vertex in $B$.
The definitions are extended in the obvious way to the case when instead of one of the sets $A$ and $B$ we have just a vertex.

A set $U \subseteq V$ is a \emph{module} in $G$ if every vertex in $V \setminus U$ is either complete or anticomplete to $U$. 
Two distinct vertices $x,y \in V$ are \emph{twins} in $G$ if $N(x) \setminus \{x,y\} = N(y) \setminus \{x,y\}$; equivalently, $x,y$ are twins if $\{x,y\}$ is a module. Two non-adjacent twins are said to be \emph{false} twins.

A \emph{tree decomposition} of a graph $G$ is a pair $(T,\beta)$ where $T$ is a tree and $\beta$ is a mapping assigning to each node $t$ of $T$ a set $\beta(t)\subseteq V(G)$ such that, for each vertex $v\in V(G)$, the set $\{t\in V(T)\colon v\in \beta(t)\}$ induces a non-empty subtree of $T$, and for each edge $uv\in E(G)$, there exists a node $t$ of $T$ such that $\{u,v\}\subseteq \beta(t)$.
The \emph{width} of a tree decomposition $(T,\beta)$ is defined as the maximum value of $|\beta(t)|-1$ over all $t\in V(T)$.
The \emph{treewidth} of a graph $G$, denoted $\tw(G)$, is the smallest width of a tree decomposition of $G$.

We let $L(G)$ denote the \emph{line graph} of $G$, i.e.~the graph with vertex set $E(G)$, with distinct $e_1,e_2 \in E(G)$ being adjacent in $L(G)$ if and only if they share a vertex in $G$. 

We use the standard notation $K_{p,q}$, $K_s$, $P_s$ and $C_s$, for the complete bipartite graph with $p$ and $q$ vertices in the parts, the complete graph, the path and the cycle on $s$ vertices, respectively.
Given two vertex-disjoint graphs $G$ and $H$, we let $G+H$ denote the disjoint union of $G$ and $H$ and, for a positive integer $t$, let $tG$ denote the disjoint union of $t$ copies of $G$.

Given two graphs $G$ and $H$, we say that $G$ is \emph{$H$-free} if no induced subgraph of $G$ is isomorphic to~$H$.
For a set $M$ of graphs, we let $\Free(M)$ denote the hereditary class of graphs containing no graph from $M$ as an induced subgraph and let $\SubgraphFree(M)$ denote the monotone class of graphs containing no graph from $M$ as a subgraph. 
For finite $M$, we often omit the set notation, writing $\Free(F_1,\ldots, F_k)$ and $\SubgraphFree(F_1,\ldots, F_k)$ instead of $\Free(M)$ and $\SubgraphFree(M)$, respectively when $M = \{F_1,\ldots, F_k\}$.

For a positive integer $k$, we let $[k]$ denote the set $\{1,\ldots, k\}$.

\subsection{Self-intersection of Graphs and Classes Closed under Self-intersection}

Given two graphs $G_1=(V_1, E_1)$ and $G_2=(V_2,E_2)$, their \emph{intersection} is the graph $G_1 \cap G_2 = (V_1 \cap V_2, E_1 \cap E_2)$.
Given a graph $G=(V,E)$ and an injective mapping $\alpha$ defined on $V$, $G^{\alpha}$ denotes the graph $(V^{\alpha}, E^{\alpha})$, where $V^{\alpha} = \{ \alpha(v)\colon v \in V \}$ and $E^{\alpha} = \{ \{\alpha(v), \alpha(w)\}\colon \{v,w\} \in E \}$.

We will say that a graph $H$ is a \emph{self-intersection} of $G$, or that $G$ \emph{can be self-intersected} to $H$, denoted $G \sic H$, if there exist injective mappings $\alpha_1, \ldots, \alpha_k$ on $V$ such that $H = G^{\alpha_1} \cap \cdots \cap G^{\alpha_k}$. 
It is not hard to see that the relation $\sic$ is reflexive and transitive.

Clearly, if $H$ is a self-intersection of $G$, then $H$ is isomorphic to a subgraph of $G$.
However, in general, not every subgraph of $G$ can be obtained by self-intersecting $G$. 
For example, a complete graph can only be self-intersected to a complete subgraph.
To distinguish between the two notions, we refer to a self-intersection of $G$ as an {\it si-subgraph} of $G$. 
The following lemma was proved in~\cite{MR3812542}, where the notion of self-intersection was originally introduced.

\begin{lemma}[Alekseev and Sorochan~\cite{MR3812542}]\label[lemma]{lem:graph-sic-induced-subgraph}
 If $H$ is an induced subgraph of a graph $G$, then $G\xrightarrow{\cap} H$.
\end{lemma}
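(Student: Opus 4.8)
The plan is to realize the induced subgraph $H = G[U]$, where $U \subseteq V$, as an intersection of just \emph{two} isomorphic copies of $G$: the copy $G$ itself, together with one further copy in which the vertices we wish to discard have been pushed out of $V$. First I would set $W := V \setminus U$ and let $\alpha_1$ be the identity on $V$, so that $G^{\alpha_1} = G$. Then I would choose a second injective map $\alpha_2$ on $V$ that fixes every vertex of $U$ and sends the vertices of $W$ bijectively to fresh symbols lying outside $V$ (this uses that the codomain of these maps is a universe strictly larger than $V$, so such fresh symbols are available). The point is that $G^{\alpha_2}$ is an isomorphic copy of $G$ agreeing with $G$ on $U$ but relocating each vertex of $W$ to a new label; intersecting it with $G$ should therefore delete exactly $W$ and leave $G[U]$ untouched.

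Second, I would verify the claim $G^{\alpha_1} \cap G^{\alpha_2} = H$ in two steps, following the definition $G_1 \cap G_2 = (V_1 \cap V_2,\, E_1 \cap E_2)$. For the vertex sets, $V \cap V^{\alpha_2} = U$, because $\alpha_2$ fixes $U$ and maps $W$ outside $V$, so no relocated vertex survives the intersection. For the edge sets, I would first note that any edge in $E \cap E^{\alpha_2}$ has both endpoints in $V \cap V^{\alpha_2} = U$; and for a pair $a,b \in U$, since $\alpha_2$ fixes both $a$ and $b$ we have $\{a,b\} \in E^{\alpha_2}$ if and only if $\{a,b\} \in E$. Hence $E \cap E^{\alpha_2} = \{\, \{a,b\} \in E : a,b \in U \,\} = E(G[U])$, and the intersection is precisely $H$, which gives $G \sic H$.

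The conceptual content here is minimal, so there is no deep obstacle; the care lies entirely in the bookkeeping, and the two points to handle correctly are the following. First, the relabeling maps must be allowed to take values outside $V$: without fresh symbols one cannot remove any vertex at all, since an injection of a finite $V$ into $V$ is a bijection and would force the intersection to retain all of $V$. Second, one must check that no spurious edge (one touching $W$, or one not already present in $G$) survives the edge-intersection, while every edge of $G[U]$ does survive; this is exactly the equivalence $\{a,b\} \in E^{\alpha_2} \iff \{a,b\} \in E$ for $a,b \in U$ used above. One could equally delete the vertices of $W$ one at a time, using the identity together with $|W|$ maps, each relocating a single vertex of $W$; the two-copy version above is just the most economical packaging, and the degenerate case $U = V$ (where $\alpha_2$ is again the identity) is covered by reflexivity of $\sic$.
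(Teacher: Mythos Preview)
Your proof is correct. The paper does not supply its own proof of this lemma; it simply attributes the result to Alekseev and Sorochan~\cite{MR3812542}, so there is no argument in the paper to compare against. Your two-copy construction (identity plus one relabeling that ejects $W = V\setminus U$ to fresh symbols) is the natural argument, and your verification that both the vertex and edge intersections come out exactly to $G[U]$ is accurate, including the observation that the codomain of the maps $\alpha_i$ must be permitted to extend beyond $V$ for the construction to work.
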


Below we prove a more general sufficient condition for a subgraph of $G$ to be an si-subgraph. 
Let a graph $H$ be isomorphic to a subgraph of a graph $G$ and let $\varphi\colon V(H) \rightarrow V(G)$ be a subgraph embedding of $H$ into $G$, that is, an injective mapping from $V(H)$ to $V(G)$ such that for every two adjacent vertices $u$ and $v$ of $H$, their images $\varphi(u)$ and $\varphi(v)$ are adjacent in $G$.
Then, the inclusion $\varphi(N_H(v))\subseteq N_{G}(\varphi(v)) \cap \varphi(V(H))$ holds for every $v \in V(H)$.

For a vertex $v \in V(H)$, the embedding $\varphi$ is \emph{induced with respect to $v$} if the neighbourhood of $\varphi(v)$ in $\varphi(V(H))$ in the graph $G$ is exactly $\varphi(N_H(v))$, i.e.~$\varphi(N_H(v)) = N_{G}(\varphi(v)) \cap \varphi(V(H))$. 

Note that if $\varphi$ is induced with respect to every vertex of $H$, then $\varphi$ is an \emph{induced} subgraph embedding of $H$ into $G$, that is, two distinct vertices $u$ and $v$ of $H$ are adjacent in $H$ if and only if their images $\varphi(u)$ and $\varphi(v)$ are adjacent in $G$.

\begin{lemma}\label[lemma]{lem:v-subgraph-sic}
    Let $G$ and $H$ be graphs. If for every vertex $v \in V(H)$ there exists a subgraph embedding of $H$ into $G$ that is induced with respect to $v$, then $G \sic H$.
\end{lemma}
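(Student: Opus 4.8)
The plan is to build, for each vertex of $H$, one isomorphic copy of $G$ in which $H$ sits ``correctly'', and then to intersect all of these copies so that precisely the vertices and edges of $H$ survive. The role of the separate embeddings is then to be exploited one per copy: the copy indexed by a vertex $v$ will be responsible for correctly realizing the (non-)adjacencies at $v$.

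First I would set up the copies. For each $v \in V(H)$, fix a subgraph embedding $\varphi_v \colon V(H) \to V(G)$ that is induced with respect to $v$, which exists by hypothesis. I then define an injective map $\alpha_v$ on $V(G)$ by declaring $\alpha_v(\varphi_v(u)) = u$ for every $u \in V(H)$ (this is consistent and injective on $\varphi_v(V(H))$ because $\varphi_v$ is injective) and sending every remaining vertex of $V(G) \setminus \varphi_v(V(H))$ to a fresh private label, chosen so that the auxiliary labels are pairwise disjoint across the different $v$ and disjoint from $V(H)$. Each $G^{\alpha_v}$ is then an isomorphic copy of $G$ whose vertex set contains $V(H)$, and the candidate self-intersection is $\bigcap_{v \in V(H)} G^{\alpha_v}$.

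Next I would check that this intersection equals $H$. For the vertex set, every copy contains $V(H)$ while the auxiliary labels are private to a single copy, so (as soon as there are at least two copies) $\bigcap_v \alpha_v(V(G)) = V(H)$. For the edge set, observe that a pair $\{x,y\}$ with $x,y \in V(H)$ belongs to $E^{\alpha_v}$ exactly when $\{\alpha_v^{-1}(x),\alpha_v^{-1}(y)\} = \{\varphi_v(x),\varphi_v(y)\} \in E(G)$, since $\alpha_v^{-1}$ restricted to $V(H)$ is $\varphi_v$. If $\{x,y\} \in E(H)$, then because each $\varphi_v$ is a subgraph embedding we get $\{\varphi_v(x),\varphi_v(y)\} \in E(G)$ for all $v$, so this edge survives in every copy and hence in the intersection. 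Conversely, if $\{x,y\} \notin E(H)$, I would use the copy indexed by $x$: since $\varphi_x$ is induced with respect to $x$, we have $N_G(\varphi_x(x)) \cap \varphi_x(V(H)) = \varphi_x(N_H(x))$, and as $y \in V(H) \setminus N_H(x)$ this forces $\varphi_x(y) \notin N_G(\varphi_x(x))$, so $\{x,y\}$ is absent from $G^{\alpha_x}$ and therefore from the intersection. Hence $\bigcap_v E^{\alpha_v} = E(H)$, so $\bigcap_v G^{\alpha_v} = H$ and $G \sic H$.

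I expect the only genuine subtlety to be the interplay between the two halves of the edge argument: every edge of $H$ must be preserved by \emph{all} copies simultaneously, whereas every non-edge must be destroyed by \emph{at least one} copy. The idea that resolves this tension is to let each non-edge $\{x,y\}$ be destroyed by the copy corresponding to one of its own endpoints, which is exactly the point at which the ``induced with respect to $v$'' hypothesis is needed, while the weaker subgraph-embedding property already suffices to preserve the edges. Finally, I would dispose of the degenerate cases $|V(H)| \le 1$ separately (for instance via \Cref{lem:graph-sic-induced-subgraph}, or by adjoining one extra copy with fresh auxiliary labels to force the vertex intersection down to $V(H)$), since the disjointness argument for the vertex set requires at least two copies.
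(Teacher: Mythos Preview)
Your proposal is correct and takes essentially the same approach as the paper: build one isomorphic copy of $G$ per vertex $v$ of $H$, arranged so that $H$ sits inside it as a subgraph with the neighbourhood at $v$ realized exactly, and intersect all the copies. The only cosmetic difference is in the final step: you route the surplus vertices of $G$ to fresh private labels so that the intersection equals $H$ on the nose (which forces you to handle $|V(H)|\le 1$ separately), whereas the paper simply lets the intersection be some graph $G'$ in which $H$ is an induced subgraph and then applies \Cref{lem:graph-sic-induced-subgraph} once more---this avoids both the bookkeeping with disjoint auxiliary labels and the degenerate-case split.
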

\begin{proof}
    Let $[s]$ be the vertex set of $H$.
    For every vertex $v \in [s]$, define $G_v$  to be a graph isomorphic to $G$ such that the identity mapping $\varphi\colon [s] \rightarrow [s]$ is a subgraph embedding of $H$ into $G_v$ that is induced with respect to $v$, that is,
    \begin{enumerate}[(1)]
        \item $H$ is a subgraph of $G_v$, i.e.~$N_H(w) \subseteq N_{G_v}(w) \cap [s]$ holds for every $w \in [s]$; and
        
        \item $N_H(v) = N_{G_v}(v) \cap [s]$.
    \end{enumerate}
    
    Let $G' = \bigcap_{v \in [s]} G_v$. Then, from (1) and (2), we conclude that $N_H(v) = N_{G'}(v) \cap [s]$ holds for every $v \in [s]$, i.e.~$\varphi$ is a subgraph embedding of $H$ into $G'$ that is induced with respect to every vertex of $H$, implying that the subgraph of $G'$ induced by $[s]$ coincides with $H$. Thus, by \cref{lem:graph-sic-induced-subgraph}, we have $G \sic G' \sic H$, and therefore $G \sic H$ by transitivity.
\end{proof}

We say that a class $\mathcal{G}$ of graphs is {\it closed under self-intersection},  or \emph{self-intersection-closed}, if $G\in \mathcal{G}$ and $G \sic H$ imply $H\in \mathcal{G}$. 

\begin{sloppypar}
From the above discussion it follows that every monotone class is self-intersection-closed and every self-intersection-closed class is hereditary. 
Similarly to the forbidden subgraph characterization for monotone classes and the forbidden induced subgraph characterization for hereditary classes, every self-intersection-closed class admits a characterization in terms of forbidden si-subgraphs. 
For a set $M$ of graphs, we let $\SiFree(M)$ denote the class of graphs containing no graph in $M$ as an si-subgraph. 
Clearly, \[\SubgraphFree(M)\subseteq \SiFree(M)\subseteq \Free(M).\]
Both inclusions can be strict even in the case when $M$ consists of a single small graph in $\cal S$.
For example, if $M = \{P_1+P_3\}$, then the classes $\SubgraphFree(M)$, $\SiFree(M)$ and $\Free(M)$ are all pairwise distinct.
Indeed, it is not difficult to see that $\SubgraphFree(P_1+P_3)=\Free(C_3+\nobreak P_1,\allowbreak C_4,K_{1,3},\allowbreak P_1+\nobreak P_3,\allowbreak P_4,\textrm{paw},\textrm{diamond},K_4)$ and $\SiFree(P_1+P_3)=\Free(P_1+P_3,P_4,\textrm{paw},\textrm{diamond})$, where diamond is the $K_4$ minus an edge and paw is the graph formed by adding a pendant vertex to a~$K_3$.
In particular, note that if $F \in M$ is a graph on $r$ vertices and $M$ does not contain a complete graph, then $K_r \in \SiFree(M) \setminus \SubgraphFree(M)$.
\end{sloppypar}
\section{Excluding Tripods}\label{sec:no-long-claws}

In this section we study graph classes excluding tripods as si-subgraphs. We let $S_{t,t,t}$ denote the graph obtained from the claw $K_{1,3}$ by subdividing each of its three edges with $t-1$ new vertices, and let $tS_{t,t,t}$ denote the graph with~$t$ components, each isomorphic to $S_{t,t,t}$. Clearly, every tripod is an induced subgraph of $tS_{t,t,t}$ for some $t$, and hence without loss of generality we will consider classes excluding $tS_{t,t,t}$ as an si-subgraph. For $t \in \bN$, we let $\cX_t = \SiFree(tS_{t,t,t})$.
We analyse structural properties of graphs in $\cX_t$ that can be applied to develop efficient algorithms for the aforementioned algorithmic problems. 
Our main result in this section is the following.

\begin{restatable}{theorem}{tStttmain}\label{th:tSttt-main}
    Let $t \in \bN$. 
    Then, there exists $c \in \bN$ such that if
    a graph $G \in \cX_t$ contains neither false twins nor a clique cutset of size at most $2$, then $G$ is complete or $\tw(G) \leq c$.
\end{restatable}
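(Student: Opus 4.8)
The plan is to prove the contrapositive: fix $c$ large (to be determined) and show that a graph $G \in \cX_t$ which is not complete, has no false twins, has no clique cutset of size at most $2$, and satisfies $\tw(G) > c$ must self-intersect to $tS_{t,t,t}$, contradicting $G \in \cX_t$. The engine for producing the si-subgraph is \cref{lem:v-subgraph-sic}: it suffices to exhibit, for \emph{every} vertex $v$ of $tS_{t,t,t}$, a subgraph embedding of $tS_{t,t,t}$ into $G$ that is induced with respect to $v$. Since precomposing one such embedding with an automorphism $\sigma$ of $tS_{t,t,t}$ produces an embedding that is induced with respect to $\sigma^{-1}(v)$, we only need one embedding per orbit of $\mathrm{Aut}(tS_{t,t,t})$ on its vertex set. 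As this group permutes the $t$ components and the three legs of each component, there are exactly $t+1$ orbits (the centres, and the vertices at each distance $d \in \{1,\dots,t\}$ from a centre). Thus the whole statement reduces to building a bounded number ($t+1$) of subgraph copies of $tS_{t,t,t}$ in $G$, each of which is ``clean'' (locally induced) at one prescribed vertex.

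The second ingredient is to convert large treewidth into a robust, subcubic, grid-like skeleton. By the Grid Minor Theorem, $\tw(G) > c$ yields a large wall as a minor, and a large enough wall contains $t$ pairwise disjoint sub-walls through which the three legs of each of the $t$ spiders can be routed. Because a wall is subcubic and comfortably contains $tS_{t,t,t}$ as a topological minor, obtaining the copies as \emph{subgraphs} is routine; the entire difficulty lies in making them locally induced. This is exactly where the hypotheses are needed: the example $K_n \in \cX_t$ shows that in a complete graph no embedding of a tripod is induced at any of its leaves, so without the non-completeness, false-twin and clique-cutset assumptions the construction must fail.

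The heart of the argument --- and the step I expect to be the main obstacle --- is controlling the dense regions that a grid minor may hide inside its branch sets, so that each leg can be realised by a path that is induced near the prescribed clean vertex. The three structural hypotheses are meant to do precisely this, via a Ramsey-type dichotomy: if a candidate branch region is too dense to route an induced leg, then it contains a large homogeneous set, and either two of its vertices have the same neighbourhood and are therefore false twins (excluded), or they are distinguished by an external vertex that supplies the extra branching and non-adjacency we need, or the density forces a clique cutset of size at most $2$ into the rest of $G$ (excluded). Iterating this dichotomy along a long well-linked structure, I would extract, for each orbit representative, an embedding in which the prescribed vertex has no superfluous neighbour inside the image: a leaf placed at a vertex with a private non-neighbour, an internal leg-vertex realised on an induced stretch, and the centre realised at a genuine degree-$3$ branch point with controlled neighbourhood.

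Finally, feeding these $t+1$ locally induced embeddings (one per orbit, hence, via automorphisms, one per vertex) into \cref{lem:v-subgraph-sic} gives $G \sic tS_{t,t,t}$, the desired contradiction, and tracking the Ramsey and grid bounds through the argument produces an explicit $c = c(t)$. The main technical risk is that taming the dense branch regions may require a global decomposition exploiting the no-false-twin and no-small-clique-cutset hypotheses simultaneously, rather than a purely local cleaning; accordingly, I expect the bulk of the work to reside in a ``cleaning lemma'' guaranteeing locally induced tripod embeddings in any non-complete, false-twin-free graph with no clique cutset of size at most two and sufficiently large treewidth.
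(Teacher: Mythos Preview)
Your strategy via \cref{lem:v-subgraph-sic} and orbit representatives is sound, and is indeed how the paper builds its self-intersections in the supporting lemmas. But the core step --- extracting locally-induced tripod embeddings from a wall minor --- has a genuine gap. A wall minor gives no control over induced neighbourhoods inside or between branch sets, and your proposed Ramsey dichotomy (a branch region too dense to route an induced leg must contain false twins, or force a clique cutset of size $\le 2$, or provide a usable external distinguisher) is not valid: a dense set can be highly irregular without any two vertices being twins and without its boundary in $G$ being a small clique. You correctly flag this as the main risk, but you offer no mechanism to resolve it, and I do not see one along the lines you sketch; what you call a ``cleaning lemma'' is at this point a restatement of the theorem rather than a step towards it.

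The paper's proof takes a completely different and much more modular route; it never touches the Grid Minor Theorem directly. It shows separately that (i) a non-complete graph in $\cX_t$ with no clique cutset of size $\le 2$ is $K_r$-free for $r = 3t^2+t+2$ (\cref{th:clique-to-Sttt}, via an explicit clique-plus-three-paths self-intersection to $tS_{t,t,t}$), and (ii) in a graph in $\cX_t$ with no cut vertex, any maximal large induced biclique is a module (\cref{th:biclique-to-Sttt}, via a biclique-plus-path self-intersection), so together with the no-false-twins assumption $G$ is $K_{r,r}$-free. A short lemma gives $L(tS_{t+1,t+1,t}) \sic tS_{t,t,t}$, hence $G$ excludes $L(rS_{r,r,r})$; and $G$ is trivially $rS_{r,r,r}$-free. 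Bounded treewidth then follows immediately from the Lozin--Razgon dichotomy (\cref{th:treewidth-dichotomy}). In short, the hypotheses are not used to clean wall minors at all: the no-small-clique-cutset assumption drives the clique argument, the no-false-twin assumption drives the biclique argument, and the treewidth bound is outsourced to a known black box. Your \cref{lem:v-subgraph-sic}-via-orbits idea does appear, but only inside the controlled clique and biclique configurations, where the required locally-induced embeddings can be written down explicitly.
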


To prove this result we will show that if a graph in $\cX_t$ without small clique cutsets or twins contains a large induced complete bipartite graph or a large complete subgraph, then this subgraph is connected to the rest of the graph in a very  structured way. 

\subsection{Handling Large Bicliques}\label{sec:large-bicliques}

In this section we show that if a graph $G$ from $\cX_t$ has no cut vertices, then any maximal induced complete bipartite graph in $G$ of sufficiently large size forms a module in $G$.

\begin{restatable}{theorem}{bicliqueToSttt}\label{th:biclique-to-Sttt}
    Let $t \in \mathbb{N}$ and let $G$ be a graph in $\cX_t$ with no cut vertices. 
    Then the vertex set of any maximal induced complete bipartite graph $K_{p,q}$ in $G$ with 
    $p,q \geq 3t^2+t+1$ is a module.
\end{restatable}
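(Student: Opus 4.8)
The plan is to argue by contradiction. Suppose the vertex set $S=A\cup B$ of the maximal induced $K_{p,q}$ (with colour classes $A$ and $B$) is \emph{not} a module. Then there is a vertex $x\notin S$ with both a neighbour and a non-neighbour in $S$. Since $G$ has no cut vertex, $\deg_G(x)\ge 2$; and since the biclique is maximal, $x$ is neither anticomplete to $A$ and complete to $B$, nor complete to $A$ and anticomplete to $B$ (either pattern would let us enlarge $K_{p,q}$). I would record these constraints and then show, in every case, that $G\sic tS_{t,t,t}$, contradicting $G\in\cX_t=\SiFree(tS_{t,t,t})$ and hence proving that $S$ is a module.

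To produce the self-intersection I would work directly from the definition of $\sic$ together with \Cref{lem:graph-sic-induced-subgraph}: it suffices to exhibit a finite family of subgraph embeddings of $H:=tS_{t,t,t}$ into $G$ that keep every edge of $H$ and such that every non-edge of $H$ is mapped to a non-edge of $G$ by at least one embedding. Intersecting the corresponding isomorphic copies of $G$ then yields a graph $G'$ with $G\sic G'$ that contains $H$ as an induced subgraph, so by \Cref{lem:graph-sic-induced-subgraph} and transitivity of $\sic$ we get $G\sic H$. (Note that \Cref{lem:v-subgraph-sic} is the stronger \emph{per-vertex} form of this criterion, which I do not expect to be applicable at the degree-$3$ centres; hence I work at the level of individual non-edges.) The base embedding is the obvious one: two-colour each component of $H$ so that all (degree-$3$) centres receive colour $0$, and embed the colour-$0$ class into $A$ and the colour-$1$ class into $B$. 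This is possible because $|A|,|B|\ge 3t^2+t+1>|V(H)|=3t^2+t$, it preserves all edges of $H$, and it already separates every non-edge whose endpoints lie in the same colour class, since these land in the same, independent, part of the biclique.

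It remains to separate the \emph{cross-class} non-edges of $H$, and here the crucial structural observation is that, with the chosen colouring, all centres lie in colour class $0$, so every colour-$1$ vertex has degree at most $2$. Thus for a cross-class non-edge $\{p,q\}$ with $q$ in class $1$, I would build a tailored subgraph embedding that maps $q$ to $x$, sends the (at most two) neighbours of $q$ to neighbours of $x$ in $G$ (available since $\deg_G(x)\ge 2$), places $p$ on a non-neighbour of $x$ in $S$ (available since $x$ has a non-neighbour in $S$), and routes the remainder of $H$ into $A\cup B$ as in the base embedding, re-orienting a few legs locally where the adjacency pattern of $x$ forces it. The spare vertex guaranteed by the bound $3t^2+t+1$ gives exactly the slack needed to avoid collisions in these reroutings. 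Under such an embedding $\gamma(q)=x$ is non-adjacent to $\gamma(p)$, so $\{p,q\}$ is separated while all edges of $H$ remain present.

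I expect the main obstacle to be the bookkeeping in the degenerate adjacency patterns of $x$ — most notably when $x$ is complete to one part (say $A$) and only mixed on the other. There one cannot place the class-$0$ endpoint $p$ on a non-neighbour inside $A$, and must instead push $p$ into $B\setminus N(x)$ and flip the orientation of the incident leg, verifying that a valid subgraph embedding of all of $H$ still exists. Checking that these local flips never collide — which is precisely where biconnectivity (ensuring $x$ is richly attached rather than pendant, so $\deg_G(x)\ge 2$) and the one-vertex size slack are used — is the technical heart of the argument. Once every non-edge of $H$ has been separated, the intersection of the base embedding with the tailored embeddings realises $tS_{t,t,t}$ as an induced subgraph of an si-subgraph of $G$, giving $G\sic tS_{t,t,t}$ and the desired contradiction.
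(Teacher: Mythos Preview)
Your high-level criterion---separate every non-edge of $H=tS_{t,t,t}$ by some subgraph embedding into $G$---is sound, but the execution has a genuine gap: you invoke the absence of cut vertices only to deduce $\deg_G(x)\ge 2$, and that is not enough. Consider the case where $x$ has \emph{exactly one} neighbour $y$ in $A\cup B$ (its remaining neighbours lie outside the biclique). For $t\ge 3$ take a cross-class non-edge $\{p,q\}$ where $q$ is the colour-$1$ vertex at distance~$1$ from the centre of its component; then $q$ has degree~$2$ and \emph{both} of its colour-$0$ neighbours---the centre $c$ and the vertex $r$ at distance~$2$---also have degree $\ge 2$ in $H$. Your tailored embedding sends $q\mapsto x$, so one of $c,r$ goes to $y$ and the other must go to some $z\notin A\cup B$. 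Whichever of $c,r$ lands on $z$ still has a further $H$-neighbour that must be mapped to a $G$-neighbour of $z$; but you have no control over the adjacencies of $z$, so there is no reason the map extends to a subgraph embedding of all of $H$. The non-edge $\{p,q\}$ remains unseparated and the contradiction is not reached.

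This is precisely the paper's Case~2, and it is where biconnectivity is genuinely used: since $y$ is not a cut vertex, $G-y$ is connected, so there is a shortest path $P$ from $x$ to $(A\cup B)\setminus\{y\}$ in $G-y$. The induced subgraph $G[A\cup B\cup V(P)]$ then has the ``anchor'' structure of \Cref{lem:biclique-path} (a large biclique with an attached induced path whose near endpoint $x$ has degree exactly~$2$), and that lemma supplies the embeddings you were trying to build by hand, routing the tail of a leg along $P$ rather than into an uncontrolled vertex $z$. In the complementary Case~1 ($x$ has at least two neighbours in $A\cup B$) the paper also avoids ad hoc rerouting: it first self-intersects $G[A\cup B\cup\{x\}]$ down to one of two normalised shapes $X_p$ or $Y_p$ via \Cref{lem:biclique+v-subgraph}, and only then invokes \Cref{lem:biclique-path}. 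Your per-non-edge scheme could in principle replace that normalisation, but in the one-neighbour case the missing ingredient is not bookkeeping---it is the path $P$.
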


Before presenting the proof of this theorem at the end of the section, we first develop several technical ingredients, the main one being the following lemma.

\begin{lemma}\label[lemma]{lem:biclique-path}
    Let $t \in \bN$, and let $H$ be a graph whose vertex set is the union of three pairwise disjoint sets $A$, $B$ and $C$ such that:
    \begin{enumerate}
        \item $A$ and $B$ are independent sets each of size $3t^2+t$;
        \item $A$ and $B$ are complete to each other;
        \item $|C| \geq 2$ and $H[C]$ is a path with endpoints $x$ and $y$;
        \item both $x$ and $y$ have neighbours in $A \cup B$, and $C \setminus \{x,y\}$ is anticomplete to $A \cup B$;
        \item $\deg_H(x)=2$;
        \item $N(x) \cap (A \cup B) \neq N(y) \cap (A \cup B)$.
    \end{enumerate}
    Then $H \sic tS_{t,t,t}$.
\end{lemma}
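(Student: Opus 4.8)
The plan is to argue directly from the definition of self-intersection, since the per-vertex criterion of \Cref{lem:v-subgraph-sic} is too strong here: an embedding of $tS_{t,t,t}$ that is induced at a degree-$3$ centre would force all but at most three image vertices onto a single side of the biclique $A\cup B$, which is impossible once $t\ge 2$. Instead I would use the following weaker sufficient condition. If there is a finite family of subgraph embeddings $\beta_1,\dots,\beta_k$ of $tS_{t,t,t}$ into $H$ such that every non-edge $\{w,w'\}$ of $tS_{t,t,t}$ is sent to a non-edge of $H$ by at least one $\beta_i$, then $H\sic tS_{t,t,t}$. Indeed, extend each $\beta_i^{-1}$ to an injection $\alpha_i$ on $V(H)$ whose values outside the image of $\beta_i$ are fresh and pairwise disjoint across the indices; then the common vertex set of the graphs $H^{\alpha_i}$ is exactly $V(tS_{t,t,t})$, every edge of $tS_{t,t,t}$ survives the intersection because each $\beta_i$ is a subgraph embedding, and every non-edge is destroyed by its witness, so $\bigcap_i H^{\alpha_i}=tS_{t,t,t}$.

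Next I would dispose of all the ``easy'' non-edges inside the biclique. Since $tS_{t,t,t}$ is a forest, a subgraph embedding into the complete bipartite graph $H[A\cup B]$ is exactly a proper $2$-colouring with the two colour classes injected into $A$ and $B$, and such injections exist because $|A|=|B|=3t^2+t=|V(tS_{t,t,t})|$. A same-component pair at even distance automatically receives equal colours and so lands on one side, while for a pair in two different components I can orient their (independent) colourings to put both endpoints in $A$; in both cases the endpoints become non-adjacent. The non-edges these biclique embeddings cannot reach are exactly the pairs lying in a common copy of $S_{t,t,t}$ at odd tree-distance, since every proper $2$-colouring places such a pair on opposite sides, hence adjacent; separating these genuinely needs the path $C$.

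For such a pair $\{w,w'\}$ at odd distance $d\ge 3$ in a component $S\cong S_{t,t,t}$, I would embed the remaining $t-1$ components into $A\cup B$ by any (balanced) proper $2$-colouring, and route $S$ through $C$. Let $z\in\{w,w'\}$ be the endpoint lying closer to the leaf of its leg, so that the $S$-path leaving $z$ away from $w'$ is a plain path towards a leaf (never hitting the degree-$3$ centre), while the part of $S$ from $z$ towards $w'$ — possibly through the centre, where the third leg simply branches off inside the biclique — is routed along alternating vertices of $A\cup B$. I would place $z$ at the endpoint $x$; the assumption $\deg_H(x)=2$ is used here, as it makes $x$ anticomplete to $A\cup B$ except at its unique biclique-neighbour $p_x$, to which I send the neighbour of $z$ on the $w$--$w'$ path. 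Because this path enters the biclique at $p_x$ and then alternates sides, after the odd number $d$ of steps the far endpoint lands on the \emph{same} side as $p_x$, and being the image of a vertex distinct from the one placed at $p_x$ it differs from $p_x$; hence it is non-adjacent to $x$, which is exactly the required separation $\beta(w)\beta(w')\notin E(H)$. The away-tail of $z$ is threaded through the interior of $C$; if it is longer than $C$ it must re-enter the biclique, and it can do so only at $y$ — this is where the asymmetry assumption $N_H(x)\cap(A\cup B)\neq N_H(y)\cap(A\cup B)$ enters, since it guarantees a neighbour $p_y\neq p_x$ of $y$ at which to continue the embedding.

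The hard part is precisely this last routing: one must verify that the whole of $tS_{t,t,t}$ is realised as a genuine, injective subgraph embedding while the $w$--$w'$ path, the centre with its third leg, and the away-tail are simultaneously threaded through $C$ and the biclique, and the check splits into cases according to the length and parity of $C$, whether the $w$--$w'$ path passes through the centre, and whether $p_x$ and $p_y$ lie on the same or on opposite sides. The bound $3t^2+t$ is what keeps all these placements injective even in the worst case, where a parity twist piles many vertices onto one side of the biclique: the total number of vertices placed in $A\cup B$ never exceeds $|V(tS_{t,t,t})|=3t^2+t$, so one side always has room. Collecting the biclique embeddings of the second paragraph together with one routed embedding for each odd same-component non-edge produces a finite witnessing family, and the reduction of the first paragraph then yields $H\sic tS_{t,t,t}$.
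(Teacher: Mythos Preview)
Your per-non-edge criterion is correct and the plan is workable, but the paper takes a different route that sidesteps the odd-distance routing you flag as the hard part. It stays with a per-vertex viewpoint: for each vertex $v$ of $tS_{t,t,t}$ of degree at most $2$ it builds one subgraph embedding into $H$ that is induced at $v$, always by placing $v$ itself at the anchor $x$, sending $v$'s centre-ward neighbour to $p_x$, and threading $v$'s leaf-ward tail through $C$ (re-entering the biclique via $y$ if the tail is long --- this is the only place condition~6 is used, just as in your argument). Intersecting all these copies yields a graph $F$ in which each $S_{t,t,t}$ component is already induced, because every intra-component non-edge has at least one endpoint of degree $\le 2$ and that endpoint's embedding kills it; the only possible surplus edges in $F$ join the degree-$3$ centres of \emph{different} components. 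These are then removed in a single global step: the paper self-intersects $H[A\cup B]\cong K_{s,s}$ down to a disjoint union $F'$ of $t$ small complete bipartite graphs (one per component, with parts the bipartition classes $Q_j,R_j$ of the $j$th $S_{t,t,t}$) by, for each $j$, swapping $Q_j$ and $R_j$ across the $A/B$ divide so that every cross-component pair lands on a common side in some copy. Then $F\cap F'\cong tS_{t,t,t}$.

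So whereas you branch on the parity and position of a specific pair $\{w,w'\}$ and must verify injectivity across several sub-cases, the paper never isolates a non-edge: the low-degree vertices uniformly handle all intra-component non-edges, and one biclique-swap construction handles all cross-component ones. Your framework is in a sense more general (it would still apply if a component had two vertices of high degree), but here the paper exploits the fact that $S_{t,t,t}$ has a unique vertex of degree $>2$ to obtain a shorter, essentially case-free argument.
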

\begin{proof}
    Let $n = |V(H)|$. By definition, $x$ has exactly two neighbours in $H$, one of which is in $A \cup B$ and the other of which is outside $A \cup B$. We will refer to $x$ as the \emph{anchor} vertex of $H$.

    Let $s = 3t^2+t$, and let
    $G$ be a graph on vertex set $[s]$ isomorphic to $tS_{t,t,t}$.
    For every vertex $v \in [s]$ of degree at most $2$ in $G$, define a graph $H_v$ with vertex set $[n]$ such that $H_v$ is isomorphic to $H$ and the identity mapping $\varphi\colon [s] \rightarrow [s]$ is a subgraph embedding of $G$ into $H_v$ that is induced with respect to $v$, i.e.
    \begin{enumerate}
        \item for every $i,j \in [s]$, if $i$ and $j$ are adjacent in $G$, then they are adjacent in $H_v$; and
        \item $N_G(v) = N_{H_v}(v) \cap [s]$.
    \end{enumerate}
    To see that $H_v$ exists for every vertex $v$ of $G$ of degree at most 2, it is enough to show the following claim.

    \begin{claim}\label{claim1}
    There exists a subgraph embedding of $G$ into $H$ that is induced with respect to $v$.\end{claim}
    
    \begin{proof}[Proof of \Cref{claim1}]
    Denote by $R$ the connected component of $G$ that contains $v$. 
    Let $w$ be the neighbour of $v$ in $G$ on the path from $v$ to the vertex of degree $3$ in $R$, and let $P$ be the shortest path in $R$ from $v$ to a vertex of degree $1$ that does not contain $w$ (such a path consists only of $v$ if $\deg_G(v)=1$).
    We first embed $R$ into $H$ as a subgraph. We do this in such a way that: 
    \begin{enumerate}
        \item The vertex $v$ is mapped to the anchor $x$ and the vertex $w$ is mapped to the unique neighbour of $x$ in $A \cup B$;
        \item the path $P$ is mapped to the path $H[C]$ if $H[C]$ is at least as long as $P$, and otherwise the initial segment of $P$ is mapped to $H[C]$ and the remaining segment of $P$ is mapped to $(A \cup B) \setminus \{w\}$ arbitrarily, making sure that the edge relation is preserved;
        \item the remaining part of $R$, i.e. $R-(V(P)\cup\{w\})$, is mapped to $A \cup B$ arbitrarily by extending the embedding of $P$ and $w$ to a subgraph embedding of $R$.
    \end{enumerate}
    The remaining $t-1$ connected components of $G$ are embedded into a part of $H[A \cup B]$ vertex-disjoint from the image of $R$. 
    The lower bound on the sizes of $A$ and $B$ guarantees that there are enough vertices to do so.
    We note that the above subgraph embedding is induced with respect to $v$. 
    Indeed, if $\deg_G(v)=2$, then this is the case because $v$ is mapped to the anchor vertex $x$ of $H$ and $\deg_H(x) = 2$. 
    If $\deg_G(v)=1$, then this is the case because $v$ is mapped to the anchor vertex $x$, and no vertex of $G$ is mapped to the neighbour of $x$ outside $A \cup B$. 
    This completes the proof of the claim.
    \end{proof}

    Let $F = \bigcap_{v} H_v$, where the intersection is over all vertices $v \in [s]$ with $\deg_G(v) \leq 2$. Note that $G$ is a subgraph of $F$, as it is a subgraph of each $H_v$. Furthermore, the identity mapping $\varphi\colon [s] \rightarrow [s]$ is a subgraph embedding of $G$ into $F$ that is induced with respect to every vertex $v \in [s]$ with $\deg_G(v) \leq 2$.
    This implies that the embedding $\varphi$ of $G$ into $F$ is almost induced, in the sense that each connected component $S_{t,t,t}$ of $G$ is embedded  by $\varphi$ as an induced subgraph due to \cref{lem:v-subgraph-sic} (because $S_{t,t,t}$ has only one vertex of degree more than~$2$), and between images of different connected components there could be edges only between the images of vertices of degree $3$.
    
    Our goal is to remove these possible edges between the images of different copies of $S_{t,t,t}$ by intersecting $F$ with some more graphs isomorphic to $H$. For every $i \in \{1,\ldots,t\}$, let $Q_i,R_i \subseteq [s]$ denote the sets of vertices such that the graph $F[Q_i \cup R_i]$ is isomorphic to $S_{t,t,t}$, and $Q_i$ and $R_i$ are the parts of the bipartition of $S_{t,t,t}$, where $Q_i$ contains the vertex of degree $3$. 
    Let $F'$ be the graph on the vertex set $[s]$ that is the disjoint union of $t$ complete bipartite graphs, where the $i$-th complete bipartite graph has parts $Q_i$, $R_i$. Note that by intersecting $F$ with $F'$ we will achieve our goal of removing edges between different copies of $S_{t,t,t}$ in $F$ while preserving all other edges, i.e. $F \cap F'$ is isomorphic to $tS_{t,t,t}$.
    Thus, to complete the proof, it suffices to show that $H \sic F'$.

    To do so, for convenience, we first self-intersect $H$ to obtain its subgraph $H'$ induced by the set $A \cup B$, i.e. $H'$ is a complete bipartite graph with parts $A,B$ of equal size $3t^2+t$. We now show that $H' \sic F'$.
    Suppose that the vertices of $H'$ are labelled in such a way that
    \[
        \bigcup_{i=1}^{t} Q_i \subseteq A 
        \quad \text{ and } \quad
        \bigcup_{i=1}^{t} R_i \subseteq B,
    \]
    which is possible as $A$ and $B$ are of sufficiently large size.
    
    For every $j \in [t]$, define $Z_j$ to be the complete bipartite graph isomorphic to $H'$ and obtained from $H'$ by swapping the sides of $Q_j$ and $R_j$ and balancing the parts using the vertices outside $Q_i,R_i$, $i \in [t]$. More formally, 
    one part of $Z_j$ is obtained from $A$ by removing $Q_j$ and adding $R_j$, and the other part is obtained from $B$ by removing $R_j$ and adding $Q_j$. 
    Note that by swapping $Q_j$ and $R_j$, the difference between the sizes of the two parts becomes $2$ or $4$, depending on the parity of $t$, so we move one or two vertices, respectively, outside $Q_i,R_i$, $i \in [t]$ from the larger part to the smaller part to make them equal in size.

    Observe that for every $i \in [t]$, the sets $R_i$ and $Q_i$ are complete to each other in $H'$ and in all the graphs $Z_j$, $j \in [t]$, so $F'$ is a subgraph of each of these graphs. Furthermore, for any $i, j \in [t]$ with $i \neq j$, each pair of sets $(Q_i, R_j)$, $(Q_j, R_i)$, $(Q_i,Q_j)$, and $(R_i,R_j)$ is anticomplete in $H'$ or $Z_i$. 
Thus, the intersection $\left(\bigcap_{j \in [t]} Z_j\right) \cap H'$ contains $F'$ as an induced subgraph, and therefore, by \cref{lem:graph-sic-induced-subgraph}, we have $H' \sic F'$.
\end{proof}

\begin{lemma}\label[lemma]{lem:biclique+v-subgraph}
    Let $H$ be a graph with vertex set $A \cup B \cup \{v\}$ such that:
    \begin{enumerate}
        \item $A$, $B$, $\{v\}$ are pairwise disjoint;
        \item $A$ is an independent set or a clique;
        \item $A$ and $B$ are complete to each other;
        \item $v$ has a neighbour in $A$ and a non-neighbour in $A$.
    \end{enumerate}
    Let $F$ be a spanning subgraph of $H$ obtained by removing some edges between $v$ and vertices in $A$.
    Then $H \sic F$.
\end{lemma}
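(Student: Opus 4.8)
The plan is to reduce the statement to the removal of a single edge at a time and to exploit the transitivity of $\sic$, working directly from the definition of self-intersection rather than through \Cref{lem:v-subgraph-sic}. Write $A' = N_H(v)\cap A$ for the set of neighbours of $v$ in $A$, and fix a non-neighbour $a^{**}\in A\setminus A'$, which exists by hypothesis~4. The structural fact driving everything is that, for any neighbour $a^*\in A'$, the pair $a^*,a^{**}$ behaves like a pair of twins away from $v$: since $A$ is a clique or an independent set and $A$ is complete to $B$, the vertices $a^*$ and $a^{**}$ have identical neighbourhoods outside $\{v,a^*,a^{**}\}$, and the only vertex distinguishing them is $v$, to which $a^*$ is adjacent and $a^{**}$ is not.

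The heart of the argument is the single-edge case. Given $a^*\in A'$, consider the permutation $\alpha$ of $V(H)$ that transposes $a^*$ and $a^{**}$ and fixes every other vertex, and form the isomorphic copy $H^{\alpha}$. Because $a^*$ and $a^{**}$ differ only in their adjacency to $v$, relabelling by $\alpha$ changes nothing except the two edges at $v$ incident to these vertices: a direct check (preserving all $A$--$B$ edges, all edges inside $A$ in the clique case, all edges inside $B$, and all edges between $v$ and $B$) gives $E(H^{\alpha}) = \left(E(H)\setminus\{\{v,a^*\}\}\right)\cup\{\{v,a^{**}\}\}$. Intersecting with the identity copy $H$ then yields $H\cap H^{\alpha}=H-\{v,a^*\}$, since the spurious edge $\{v,a^{**}\}$ of $H^{\alpha}$ is absent from $H$ and does not survive the intersection, while $\{v,a^*\}$ is the unique edge of $H$ that is killed; the common vertex set is $V(H)$. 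By the definition of self-intersection, applied with the two injective maps $\mathrm{id}$ and $\alpha$, this shows $H\sic H-\{v,a^*\}$.

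To reach the general statement, let $\{v,a_1\},\dots,\{v,a_m\}$ be the edges removed to form $F$, where each $a_i\in A'$, and set $F_0=H$ and $F_i=F_{i-1}-\{v,a_i\}$, so that $F_m=F$. Each $F_{i-1}$ still satisfies the hypotheses of the single-edge argument: the partition $A\cup B\cup\{v\}$, the clique/independence of $A$, and the completeness of $A$ to $B$ are untouched by deleting edges at $v$; the vertex $a_i$ is still a neighbour of $v$ in $F_{i-1}$; and $a^{**}$ remains a non-neighbour of $v$. Applying the single-edge case to $F_{i-1}$ with the same witness $a^{**}$ gives $F_{i-1}\sic F_i$, and transitivity of $\sic$ yields $H=F_0\sic F_m=F$. (If no edges are removed, $F=H$ and the claim is immediate by reflexivity.)

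The main subtlety I anticipate lies not in these routine verifications but in the choice of method. The natural route through \Cref{lem:v-subgraph-sic} breaks down precisely at the vertex $v$: an embedding of $F$ into $H$ induced with respect to $v$ would have to send $v$ to a vertex of $H$ of degree $\deg_F(v)$ with a matching neighbourhood, and no such vertex need exist once $v$ has lost edges to $A$. Circumventing this by working directly from the definition of self-intersection, with an explicit pair of copies built from a single transposition, is what makes the argument go through, and the twin-like relationship between a neighbour and a non-neighbour of $v$ in $A$ is exactly the ingredient that guarantees the transposition disturbs only the intended edge.
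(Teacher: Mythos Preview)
Your proof is correct and takes essentially the same approach as the paper's: both exploit that a neighbour $a^*$ and a fixed non-neighbour $a^{**}$ of $v$ in $A$ are twins in $H-v$, so the transposition swapping them produces an isomorphic copy whose intersection with $H$ kills exactly the edge $\{v,a^*\}$. The only cosmetic difference is that the paper intersects all the swapped copies with $H$ simultaneously to obtain $F$ in one step, whereas you remove one edge at a time and invoke transitivity.
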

\begin{proof}
    Let $|A| = k$ and
    suppose without loss of generality that $A = [k]$, $N_H(v) \cap A = [s]$, and $N_{F}(v) \cap A = [t]$, for some $0 \leq t < s < k$.

    For every vertex $x \in [s] \setminus [t]$, i.e.~every vertex in $A$ that is adjacent to $v$ in $H$ but non-adjacent to $v$ in $F$, let $Z_x$ be the graph obtained from $H$ by swapping the adjacencies between $v,x$ and $v,k$. More precisely, $E(Z_x) = (E(H) \setminus \{v,x\}) \cup \{v,k\}$. Note that $x$ and $k$ have the same neighbourhoods in $H-v$, and $v$ is adjacent to exactly one of them in each $H$ and $Z_x$, which implies that $Z_x$ is isomorphic to $H$.

    We claim that the intersection $(\bigcap_{x \in [s] \setminus [t]} Z_x) \cap H$ coincides with $F$. 
    Indeed, for each $x \in [s] \setminus [t]$, the graph $Z_x-v$ is equal to $H-v$. 
    Furthermore, $v$ is adjacent to $[t] \cup (N_H(v)\cap B)$
    in each of the graphs $H$, $Z_x$, $x \in [s] \setminus [t]$, and for every $y \in [k] \setminus [t]$, the vertex $v$ is not adjacent to $y$ in at least one of these graphs.
    Thus, in the intersection $(\bigcap_{x \in [s] \setminus [t]} Z_x) \cap H$, the neighbourhood of $v$ is equal to $N_F(v) = [t] \cup (N_H(v)\cap B)$.    
    Consequently, $F = (\bigcap_{x \in [s] \setminus [t]} Z_x) \cap H$, and therefore $H \sic F$.
\end{proof}

In this section, we use \cref{lem:biclique+v-subgraph} only in the case when $A$ is an independent set; the case when $A$ is a clique will be used in \Cref{sec:large-cliques}.

Let $X_{p}$ denote the graph obtained from $K_{p,p}$ by adding a vertex that has exactly two neighbours, such that these neighbours belong to the same part of the $K_{p,p}$. Similarly, let $Y_{p}$ denote the graph obtained from $K_{p,p}$ by adding a vertex that has exactly two neighbours, such that these neighbours belong to different parts of the $K_{p,p}$. See \cref{fig:main} for an illustration.

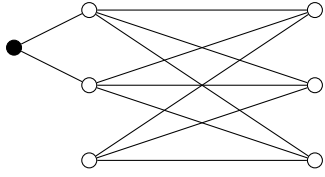
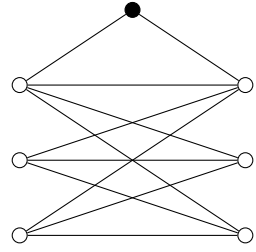
\begin{figure}[ht]
\centering
\begin{subfigure}[t]{0.45\textwidth}
\centering
\begin{tikzpicture}[every node/.style={circle, draw, fill=white}, node distance=1cm, inner sep=2pt]
  \node (a1) at (0,2) {};
  \node (a2) at (0,1) {};
  \node (a3) at (0,0) {};
  \node (b1) at (3,2) {};
  \node (b2) at (3,1) {};
  \node (b3) at (3,0) {};
  \node[fill=black] (x) at (-1,1.5) {};

  \foreach \i in {a1,a2,a3}
    \foreach \j in {b1,b2,b3}
      \draw (\i) -- (\j);

  \draw (x) -- (a1);
  \draw (x) -- (a2);
\end{tikzpicture}
\caption{\(X_3\): a vertex (black) is adjacent to two vertices in the same part of $K_{3,3}$.}
\label{fig:X3}
\end{subfigure}
\hfill
\begin{subfigure}[t]{0.45\textwidth}
\centering
\begin{tikzpicture}[every node/.style={circle, draw, fill=white}, node distance=1cm, inner sep=2pt]
  \node (a1) at (0,2) {};
  \node (a2) at (0,1) {};
  \node (a3) at (0,0) {};
  \node (b1) at (3,2) {};
  \node (b2) at (3,1) {};
  \node (b3) at (3,0) {};
  \node[fill=black] (y) at (1.5,3) {};

  \foreach \i in {a1,a2,a3}
    \foreach \j in {b1,b2,b3}
      \draw (\i) -- (\j);

  \draw (y) -- (a1);
  \draw (y) -- (b1);
\end{tikzpicture}
\caption{\(Y_3\): a vertex (black) is adjacent to one vertex in each part of $K_{3,3}$.}
\label{fig:Y3}
\end{subfigure}

\caption{The graphs $X_3$ and $Y_3$.}
\label{fig:main}
\end{figure}

\begin{lemma}\label[lemma]{lem:XpYp-tSttt}
    Let $t,p \in \bN$, where $p \geq 3t^2+t+1$. 
    Let $H'$ be a graph isomorphic to $X_p$ or $Y_p$ with vertex set $A' \cup B' \cup \{x\}$, where $A'$ and $B'$ are complete to each other and $x$ has degree $2$. Then $H' \sic tS_{t,t,t}$.
\end{lemma}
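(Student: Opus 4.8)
The plan is to reduce both the $X_p$ and the $Y_p$ cases to a single application of \cref{lem:biclique-path}. The key observation is that $H'$ is a complete bipartite graph $K_{p,p}$ with parts $A'$ and $B'$ together with one extra vertex $x$ of degree $2$; writing $u_1,u_2$ for the two neighbours of $x$, the only difference between $X_p$ and $Y_p$ is whether $u_1,u_2$ lie in the same part (the $X_p$ case) or in different parts (the $Y_p$ case). In either situation I would exhibit an induced subgraph $H$ of $H'$ that satisfies the six hypotheses of \cref{lem:biclique-path}, deduce $H \sic tS_{t,t,t}$ from that lemma, and then conclude $H' \sic H \sic tS_{t,t,t}$ using \cref{lem:graph-sic-induced-subgraph} together with the transitivity of $\sic$.

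Concretely, set $s = 3t^2+t$ and assume without loss of generality that $u_1 \in A'$. I would designate $u_1$ as an endpoint of the path, taking $C = \{x,u_1\}$ so that $H[C]$ is the single edge $xu_1$ with endpoints $x$ and $y := u_1$. For the biclique I would pick $A \subseteq A' \setminus \{u_1\}$ and $B \subseteq B'$, each of size exactly $s$, chosen so that the remaining neighbour $u_2$ of $x$ lies in $A \cup B$; this is possible because $u_2 \in (A'\cup B') \setminus \{u_1\}$ and because the bound $p \geq s+1$ leaves at least $s$ vertices available in the relevant part after deleting $u_1$ (placing $u_2$ into $A$ when $u_2 \in A'$, and into $B$ when $u_2 \in B'$). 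Setting $H := H'\bigl[\{x,u_1\} \cup A \cup B\bigr]$, the conditions of \cref{lem:biclique-path} are then routine to check: $A$ and $B$ are independent sets of size $s$ that are complete to each other, being subsets of the two parts of $K_{p,p}$; $H[C]$ is a path on two vertices; the vertex $x$ has degree $2$ in $H$ with $N(x) \cap (A\cup B) = \{u_2\}$ and its other neighbour equal to $y = u_1 \in C$, while $C \setminus \{x,y\} = \emptyset$ is vacuously anticomplete to $A\cup B$; and $y = u_1 \in A'$ is adjacent to all of $B \subseteq B'$, so $N(y)\cap(A\cup B) \supseteq B$ has size $s \geq 2$ and hence differs from $N(x)\cap(A\cup B) = \{u_2\}$.

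This construction handles the $X_p$ and $Y_p$ cases uniformly, the only case distinction being whether $u_2$ is placed into $A$ or into $B$. Since there are no genuine computations involved, I do not expect a serious obstacle; the only point requiring care is the bookkeeping around the size bound, namely verifying that $p \geq 3t^2+t+1$ is exactly enough room to remove $u_1$ from its part and still retain $s = 3t^2+t$ vertices on each biclique side while keeping $u_2$. Once the induced subgraph $H$ is set up correctly, \cref{lem:biclique-path} and the transitivity of $\sic$ close the argument immediately.
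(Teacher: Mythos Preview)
Your proposal is correct and follows essentially the same approach as the paper's proof: both arguments pick one neighbour of $x$ to serve as the second endpoint of a two-vertex path $C$, place the other neighbour inside the biclique $A\cup B$, and verify the hypotheses of \cref{lem:biclique-path} on the resulting induced subgraph. The paper's write-up is slightly terser but the construction and the verification are the same.
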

\begin{proof}
    The lemma follows from \cref{lem:biclique-path} by noticing that both the graphs $X_p$ and $Y_p$ contain an induced subgraph satisfying the conditions of \cref{lem:biclique-path}.
    Indeed, let $y,z$ be the neighbours of $x$ in $A' \cup B'$. Then, by taking $C=\{x,y\}$, $A \subseteq A'$ and $B \subseteq B'$ such that $|A| = |B| = 3t^2 + t$, $y \not\in A \cup B$ and $z \in A \cup B$, one can see that $H = H'[A \cup B \cup C]$ satisfies the conditions of \cref{lem:biclique-path}. Thus, $H' \sic H \sic tS_{t,t,t}$.
\end{proof}

\begin{lemma}\label[lemma]{lem:biclique-1}
    Let $p \geq 3$ and
    let $H$ be a graph with vertex set $A \cup B \cup \{v\}$ such that:
    \begin{enumerate}
        \item $A$, $B$, $\{v\}$ are pairwise disjoint;
        \item $A$ and $B$ are independent sets that are complete to each other;
        \item $|A| \geq p$ and $|B| \geq p$;
        \item $v$ has at least two neighbours in $A \cup B$;
        \item $v$ has a neighbour and a non-neighbour in at least one of the sets $A,B$.
    \end{enumerate}
    Then $H \sic X_p$ or $H \sic Y_p$.
    In particular, if $p \geq 3t^2+t+1$ for some $t \in \bN$, then $H \sic tS_{t,t,t}$.
\end{lemma}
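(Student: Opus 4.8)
The plan is to drive $H$ down to either $X_p$ or $Y_p$ by a short chain of self-intersections: I will use \Cref{lem:biclique+v-subgraph} to delete unwanted edges incident to $v$, use \Cref{lem:graph-sic-induced-subgraph} to trim $A$ and $B$ down to exactly $p$ vertices each, and finally invoke \Cref{lem:XpYp-tSttt} for the closing ``in particular'' claim. By the symmetry between $A$ and $B$, I may assume condition~(5) holds for $A$, so that $v$ has a neighbour $a^{+}\in A$ and a non-neighbour $a^{-}\in A$. In particular \Cref{lem:biclique+v-subgraph} (with its set ``$A$'' taken to be our $A$, which is independent) applies, and it lets me delete, in a single self-intersection, any chosen subset of the edges between $v$ and $A$ while keeping everything else; the only requirement is that $v$ retains at least one neighbour and one non-neighbour in $A$, which is exactly condition~(5).

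I would first split on whether $v$ has a neighbour in $B$. If $N(v)\cap B=\emptyset$, then all of the (at least two, by~(4)) neighbours of $v$ lie in $A$, so $|N(v)\cap A|\geq 2$. I would apply \Cref{lem:biclique+v-subgraph} to delete $v$--$A$ edges until exactly two neighbours of $v$ remain in $A$, and then pass to an induced subgraph keeping those two neighbours together with enough further vertices to leave exactly $p$ vertices in $A$ and any $p$ vertices in $B$. The resulting graph is $X_p$, so $H\sic X_p$ by \Cref{lem:graph-sic-induced-subgraph} and transitivity of $\sic$.

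If instead $v$ has a neighbour $b^{+}\in B$, I aim for $Y_p$. I would first use \Cref{lem:biclique+v-subgraph} to delete every edge from $v$ to $A$ except the one to $a^{+}$, so that afterwards $N(v)\cap A=\{a^{+}\}$. Now there are two sub-cases. If $v$ is \emph{not} complete to $B$, then $v$ has both a neighbour and a non-neighbour in $B$, and a symmetric application of \Cref{lem:biclique+v-subgraph} (with its set ``$A$'' taken to be our $B$) deletes all edges from $v$ to $B$ except the one to $b^{+}$; taking an induced subgraph that trims $A$ and $B$ to $p$ vertices each while retaining $a^{+}$ and $b^{+}$ then yields $Y_p$.

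The remaining sub-case, where $v$ is complete to $B$, is the genuine obstacle: here \Cref{lem:biclique+v-subgraph} is useless on the $B$-side, since $v$ has no non-neighbour in $B$ to swap against, so I must construct the required self-intersection directly. I would first pass to an induced subgraph $W$ on $\{v\}\cup A'\cup B'$ with $a^{+}\in A'$ and $|A'|=|B'|=p$, so that $W$ consists of a copy of $K_{p,p}$ on $A'\cup B'$ together with $v$ adjacent to $a^{+}$ and to all of $B'$. Writing $A'=\{a_1,\dots,a_p\}$ with $a_1=a^{+}$ and $B'=\{b_1,\dots,b_p\}$, I would then exploit the balance $|A'|=|B'|$ through the involution $\tau$ of $V(W)$ that fixes $v$ and swaps $a_i\leftrightarrow b_i$ for each $i$. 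Because $v$ is complete to $B'$, in the copy $W^{\tau}$ the vertex $v$ becomes complete to $A'$ and adjacent within $B'$ only to $b_1$, while the biclique on $A'\cup B'$ is unchanged; hence $W\cap W^{\tau}$ leaves $v$ adjacent to exactly $a_1$ and $b_1$, giving $W\cap W^{\tau}=Y_p$ and so $W\sic Y_p$ and $H\sic Y_p$. The crux is precisely this observation that the balanced biclique admits the side-swapping involution $\tau$, a single application of which erases all superfluous $v$--$B'$ edges at once. In every case we obtain $H\sic X_p$ or $H\sic Y_p$, and when $p\geq 3t^2+t+1$ the conclusion $H\sic tS_{t,t,t}$ follows from \Cref{lem:XpYp-tSttt} by transitivity.
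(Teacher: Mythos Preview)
Your proof is correct and follows essentially the same approach as the paper's. Both arguments trim $A$ and $B$ to size $p$ via \Cref{lem:graph-sic-induced-subgraph}, use \Cref{lem:biclique+v-subgraph} to prune $v$'s adjacencies on any side where it is neither complete nor anticomplete, and handle the genuinely awkward ``$v$ complete to $B$'' case by exploiting the balance $|A|=|B|$ to build an isomorphic copy with the roles of the two sides swapped and intersect. The only cosmetic difference is ordering: the paper performs one side-swap intersection first (reducing to the case where $v$ has a neighbour and a non-neighbour on both sides) and then applies \Cref{lem:biclique+v-subgraph} twice, whereas you first reduce $N(v)\cap A$ to a single vertex and then obtain $Y_p$ directly from one explicit involution $\tau$.
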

\begin{proof}
    By removing some vertices of the graph $H$ from $A$ and/or $B$ we can make sure that $|A| = |B| = p$ while the resulting graph still satisfies the assumptions of the lemma. Thus, using \cref{lem:graph-sic-induced-subgraph}, we may assume without loss of generality that $|A| = |B| = p$.

    If $v$ has a neighbour and a non-neighbour in both $A$ and $B$, then by applying \cref{lem:biclique+v-subgraph} twice, we can conclude that $H \sic Y_p$.

    If $v$ is complete to one of the sets $A$ and $B$, say to $B$, then by assumption, $v$ has a neighbour and a non-neighbour in $A$. Let $k$ be the number of neighbours of $v$ in $A$, and define $H'$ to be the graph with vertex set $A \cup B \cup \{v\}$ such that $H'-v = H-v$, $v$ is complete to $A$ and $v$ has exactly $k$ neighbours in $B$. Since $|A|=|B|$, it is easy to see that $H'$ is isomorphic to $H$.
    Furthermore, note that in the intersection $H \cap H'$, the vertex $v$ has a neighbour and a non-neighbour in both $A$ and $B$.
    Thus, by the previous case, $H \sic H \cap H' \sic Y_p$.

    If $v$ is anticomplete to one of the sets, say $B$, then by assumption, $v$ has at least two neighbours and at least one non-neighbour in $A$. Thus, by \cref{lem:biclique+v-subgraph}, one concludes that $H \sic X_p$.

    The above establishes that $H \sic X_p$ or $H \sic Y_p$. 
    Now, if $p \geq 3t^2+t+1$ for some $t \in \bN$, then, by \cref{lem:XpYp-tSttt}, we have $H \sic tS_{t,t,t}$.
\end{proof}

We are now ready to prove the main result of this section, which we restate for convenience.

\bicliqueToSttt*
\begin{proof}
    Let $s = 3t^2+t+1$, and
    let $A,B \subseteq V(G)$ be two disjoint independent sets in $G$ such that $G[A \cup B]$ is a maximal induced complete bipartite graph in $G$ and $|A| \geq s$ and $|B| \geq s$. 

    To prove the statement we need to show that every vertex $x$ outside $A \cup B$ is either complete or anticomplete to $A \cup B$.
    Suppose there exists a vertex $x \in V(G) \setminus (A \cup B)$ that is neither complete nor anticomplete to $A \cup B$. We will show that this implies $G \sic tS_{t,t,t}$. Consider two cases.

    \textbf{Case 1:} \textit{$x$ has at least two neighbours in $A \cup B$}. 
    Note, due to the maximality of $G[A \cup B]$, the vertex $x$ cannot be complete to one of the sets $A$ and $B$, and anticomplete to the other.
    Furthermore, since $x$ is not complete to $A \cup B$, it must have a neighbour and a non-neighbour in at least one of the parts $A$ and $B$. 
    Then $G \sic tS_{t,t,t}$ by \cref{lem:biclique-1}.

    \textbf{Case 2:} \textit{$x$ has exactly one neighbour in $A \cup B$, which we denote by $y$.} Since $G$ has no cut vertices, $G-y$ is connected. Let $P$ be a shortest path from $x$ to $(A \cup B) \setminus \{y\}$ in $G-y$. Then the graph $H=G[A \cup B \cup V(P)]$ satisfies the conditions of \cref{lem:biclique-path} and therefore $G \sic H \sic tS_{t,t,t}$.
\end{proof}

\subsection{Handling Large Cliques}\label{sec:large-cliques}

In this section we show that if a connected graph $G$ from $\cX_t$ has no clique cutsets of size $1$ or $2$, then $G$ is either a complete graph or contains no large cliques.

\begin{restatable}{theorem}{cliqueToSttt}\label{th:clique-to-Sttt}
    Let $t \in \mathbb{N}$.
    Every graph in $\cX_t$ with no clique cutsets of size at most $2$ is complete or $K_r$-free, where $r = 3t^2+t+2$.   
\end{restatable}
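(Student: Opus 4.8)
The plan is to prove the contrapositive-style statement: assume $G \in \cX_t$ has no clique cutsets of size at most $2$ and is \emph{not} complete, and show that $G$ cannot contain a clique of size $r = 3t^2+t+2$. So suppose for contradiction that $G$ contains a clique $K$ on $r$ vertices. Since $G$ is not complete, there is a vertex outside $K$, and since $G$ has no clique cutset of size at most $2$ (in particular no cut-vertex), $G$ is connected and in fact $2$-connected in the relevant sense. The strategy mirrors the biclique argument in \Cref{sec:large-bicliques}: I want to show that a sufficiently large clique is attached to the rest of $G$ in a rigid way, and then derive the self-intersection $G \sic tS_{t,t,t}$, contradicting $G \in \cX_t$.

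\textbf{Main steps.} First I would locate a vertex $x \notin K$ that is neither complete nor anticomplete to $K$. Such a vertex must exist: otherwise every vertex outside $K$ is either complete or anticomplete to $K$, and one can argue that $K$ (or a subset of it) forms a clique cutset or that $G$ decomposes in a way forbidden by the hypothesis — here the absence of clique cutsets of size at most $2$ must be leveraged to force the existence of a vertex with a \emph{proper, nonempty} neighbourhood in $K$. Concretely, if no such $x$ existed, the neighbours of any outside vertex form a clique contained in $K$, and picking a minimal separator one would expose a clique cutset; the size bound on clique cutsets rules out the degenerate small cases, which is why $r$ is taken large. Second, having fixed $x$ with a neighbour and a non-neighbour in $K$, I would invoke the clique version of \Cref{lem:biclique+v-subgraph} (the excerpt explicitly flags that the ``$A$ is a clique'' case of that lemma is reserved for this section). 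Taking $A = K$ as the clique and $B = \emptyset$ (or absorbing a second clique if available), the lemma lets me self-intersect $G$ to a graph in which $x$ is adjacent to exactly two prescribed vertices of $K$, realizing a clique-analogue $X_p$ or the configuration feeding \Cref{lem:biclique-path}.

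\textbf{Completing the reduction.} The third step is to reach $tS_{t,t,t}$. Here I expect to adapt \Cref{lem:XpYp-tSttt} and \Cref{lem:biclique-1}, or more likely \Cref{lem:biclique-path} directly, with the clique $K$ playing a role analogous to $A \cup B$. If $x$ has at least two neighbours in $K$, I trim its neighbourhood down to exactly two using the clique case of \Cref{lem:biclique+v-subgraph}, producing the degree-$2$ anchor vertex required by the hypotheses of \Cref{lem:biclique-path}. If $x$ has exactly one neighbour in $K$, I take a shortest path $P$ from $x$ back to $K$ avoiding that neighbour (using $2$-connectedness, guaranteed by the no-small-clique-cutset hypothesis), giving the pendant path needed for condition~(3) of \Cref{lem:biclique-path}. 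The bound $p \geq 3t^2+t+1$, i.e.\ $|K| \geq r-1 = 3t^2+t+1$, ensures there are enough vertices to embed all $t$ copies of $S_{t,t,t}$.

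\textbf{The main obstacle} will be the structural step of proving that some outside vertex $x$ is neither complete nor anticomplete to $K$, and that the relevant path/anchor configuration survives inside a clique rather than a biclique. The biclique lemmas crucially use that $A$ and $B$ are \emph{independent} sets complete to each other, so the embedding of $S_{t,t,t}$ (a bipartite, triangle-free graph) as an induced subgraph is transparent. Inside a clique, every pair of used vertices is adjacent, so to embed a triangle-free $tS_{t,t,t}$ as an induced si-subgraph one cannot simply place its vertices in $K$; instead one must use the self-intersection operation itself to \emph{delete} the internal edges of $K$, exactly as \Cref{lem:biclique-path}'s proof deletes cross-component edges via the graphs $Z_j$. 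Verifying that the clique case of \Cref{lem:biclique+v-subgraph} composes correctly with a clique-to-biclique-to-$tS_{t,t,t}$ sequence of self-intersections — and that maximality/connectivity hypotheses transfer — is the delicate part; the rest is bookkeeping analogous to the proof of \Cref{th:biclique-to-Sttt}.
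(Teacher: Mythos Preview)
Your plan has a genuine gap at the ``completing the reduction'' step. Trimming the outside vertex $x$ down to \emph{two} neighbours in the clique $K$ and then invoking \Cref{lem:biclique-path} cannot work: that lemma requires $A$ and $B$ to be independent sets complete to each other, and a clique furnishes no such bipartition. More fundamentally, the mechanism you hope to borrow --- the $Z_j$ graphs in the proof of \Cref{lem:biclique-path} that kill cross-component edges by swapping $Q_j$ and $R_j$ between the two sides of a biclique --- has no analogue when the host is a clique, because there are no sides to swap. A complete graph self-intersects only to complete graphs, so there is no ``clique-to-biclique'' self-intersection sequence of the kind you sketch. Concretely, the obstacle is the degree-$3$ vertex of each $S_{t,t,t}$ component: to apply \Cref{lem:v-subgraph-sic} you need an embedding that is induced with respect to that vertex, but any vertex placed inside $K$ sees all of $K$, and a degree-$2$ anchor outside $K$ cannot host it either.

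The paper's proof resolves this by working with a degree-$3$ anchor instead. It proves a separate technical lemma (\Cref{lem:clique-3-path}) asserting that if a large clique $A$ has three internally disjoint paths $Q_1,Q_2,Q_3$ attached from a common outside vertex $w$ of degree exactly $3$ (with $Q_1\cup Q_2$ inducing a chordless cycle), then $H \sic tS_{t,t,t}$; here $w$ absorbs the degree-$3$ vertex and the clique absorbs everything else. The proof of the theorem then splits into three cases to manufacture this configuration: if some outside vertex has at least three neighbours in the maximal clique $A'$, trim to exactly three via \Cref{lem:clique+v-subgraph}; if some outside vertex $w$ has exactly two neighbours $y_1,y_2$, use the absence of a clique cutset of size $2$ to find a third path from $w$ to $A'\setminus\{y_1,y_2\}$; if every outside vertex has at most one neighbour in $A'$, first build a chordless cycle through $A'$ using $2$-connectedness, then again use the no-size-$2$-clique-cutset hypothesis to find the third path. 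Your case split (``at least two neighbours'' versus ``exactly one neighbour'') and the target degree ($2$ rather than $3$) are both off by one from what is actually needed.
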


Before proving this theorem, we first establish some auxiliary technical results that will be used in the argument.

\begin{lemma}\label[lemma]{lem:clique-3-path}
    Let $t \in \bN$ and let $H$ be a graph with vertex set $A \cup T$ such that:
    \begin{enumerate}
        \item $|A| \geq 3t^2 + t + 1$;
        
        \item $A$ is a clique in $H$;

        \item $T = V(Q_1) \cup V(Q_2) \cup V(Q_3)$, where $Q_1$, $Q_2$, $Q_3$ are paths in $H$ from a vertex $w \not\in A$ to $A$ such that the subpaths $Q_1 - w$, $Q_2 - w$, $Q_3 - w$ are pairwise vertex disjoint;

        \item $\deg_H(w) = 3$;
        
        \item $|A \cap T| = 3$ and the three vertices in the intersection are the endpoints of $Q_1$, $Q_2$, $Q_3$ that are different from $w$; we denote these vertices by $y_1$, $y_2$, $y_3$, respectively;

        \item $H[V(Q_1) \cup V(Q_2)]$ is a chordless cycle.
        
        \item $A \setminus T$ and $T \setminus A$ are anticomplete to each other.
    \end{enumerate}
    Then $H \sic tS_{t,t,t}$.
\end{lemma}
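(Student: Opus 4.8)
The plan is to apply Lemma~\ref{lem:v-subgraph-sic} with the source graph equal to $H$ and the target graph equal to $tS_{t,t,t}$; it then suffices to construct, for every vertex $v$ of $tS_{t,t,t}$, a subgraph embedding of $tS_{t,t,t}$ into $H$ that is induced with respect to $v$. Write $s=3t^2+t=|V(tS_{t,t,t})|$, so that hypothesis~(1) gives $|A|\geq s+1$. The point is that the prescribed degree-$3$ vertex $w$ together with its three legs $Q_1,Q_2,Q_3$ is exactly what is needed to host one copy of $S_{t,t,t}$ with an \emph{induced} central vertex, while the completeness of $A$ lets me route every other vertex through the clique by an arbitrary injection (each edge of the target then automatically maps to a clique edge).

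For a vertex $v$ that is one of the $t$ centres of $tS_{t,t,t}$, I would map $v$ to $w$, run the three incident legs along $Q_1,Q_2,Q_3$ (continuing each into $A$ through its endpoint $y_i$ if it is longer than $Q_i$), and place the remaining $t-1$ components arbitrarily in $A$. Since $\deg_H(w)=3$ by~(4) and $N_H(w)$ consists exactly of the first vertices of $Q_1,Q_2,Q_3$, which are the images of the three neighbours of $v$, this embedding is induced with respect to $v$ no matter how the rest is routed. (For large $t$ a clique vertex cannot serve here, since it would have many more than three neighbours inside any $s$-vertex image, so $w$ is essentially forced; in any case it works for all $t$.)

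The substantive case is a non-centre $v$, of degree at most $2$. Here sending $v$ to $w$ fails when $v$ is close to its centre, since the centre cannot be hosted by an internal degree-$2$ leg vertex; this is precisely why I would anchor $v$ inside the chordless cycle $H[V(Q_1)\cup V(Q_2)]$. I would choose an internal vertex $u$ of this cycle adjacent to one of the clique vertices $y_1,y_2$, set $\varphi(v)=u$, send the centre-ward neighbour of $v$ to that clique vertex (so the whole centre-ward part, degree-$3$ centre included, is absorbed into $A$ immediately, which removes the difficulty above), and run the leaf-ward leg along the cycle and then into $A$. Chordlessness of the cycle together with the anticompleteness in~(7) forces $u$ to have only its two intended neighbours inside the image, which yields inducedness with respect to $v$. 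When the cycle degenerates to a triangle (both $Q_1,Q_2$ of length $1$) there is no such internal $u$, and I would instead anchor at $w$, reaching $A$ in a single step through $y_1,y_2$ and discarding the unused neighbour(s) of $w$ from the image; this is where the lone spare clique vertex from $|A|\geq s+1$ is spent, and the bound is tight in the fully degenerate case in which $w$ is adjacent only to clique vertices.

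The hard part will be the bookkeeping in the non-centre case: guaranteeing an anchor of the right kind for every length profile of $Q_1,Q_2,Q_3$, routing the centre-ward part, the leaf-ward leg, and the other $t-1$ components into $A$ pairwise disjointly (which the size bound affords), and, crucially, certifying that $\varphi(v)$ meets exactly $\varphi(N_{tS_{t,t,t}}(v))$ inside the image. Any stray neighbour of the anchor that the hypotheses do not explicitly forbid --- for instance an edge from an internal $Q_1$-vertex to $y_3$ or into $Q_3$ --- has to be excluded from the image, once more drawing on the clique slack. Assembling these embeddings for all $v$ and feeding them into Lemma~\ref{lem:v-subgraph-sic} gives $H\sic tS_{t,t,t}$.
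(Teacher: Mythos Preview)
Your proposal is correct and follows essentially the same approach as the paper: both apply Lemma~\ref{lem:v-subgraph-sic}, send degree-$3$ vertices of $tS_{t,t,t}$ to $w$ with the three legs routed along $Q_1,Q_2,Q_3$, and for degree-$\le 2$ vertices anchor at the vertex of $Q_1$ adjacent to $y_1$, push the centre-ward part into the clique through $y_1$, route the leaf-ward leg along the chordless cycle, and drop $Q_3$ (including $y_3$) from the image to kill stray adjacencies. The paper organises the non-centre case by splitting on $\deg(v)\in\{1,2\}$ and in each case passes to an explicit induced subgraph $H'$ of $H$ (removing $V(Q_3)\setminus\{w\}$ for degree $2$, and all of $T\setminus\{u,y_1\}$ for degree $1$), which automatically subsumes your triangle case since then the neighbour of $y_1$ on $Q_1$ is $w$ itself; otherwise the arguments coincide.
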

\begin{proof} 
   For convenience, \cref{fig:clique+tripod} provides a schematic illustration of the structure of the graph $H$. 

\begin{figure}
\centering

\begin{tikzpicture}[scale=0.8]

\tikzset{vertex/.style={circle, draw, fill=white, inner sep=2pt}}

\draw[fill=black!2] (0,0) circle (2.3);
\node at (-2.7,0) {\Large $A$};

\node[label=left:$Q_1$] at (-1.9,1.8) {};
\node[label=right:$Q_2$] at (-0.1,2.8) {};
\node[label=right:$Q_3$] at (1.9,1.8) {};

\node[vertex, label=below:$y_1$] (y1) at (-1.75,0.5) {};
\node[vertex, label=below:$y_2$] (y2) at (0,1.85) {};
\node[vertex, label=below:$y_3$] (y3) at (1.75,0.5) {};

\node[circle, draw, fill=black!10, inner sep=2pt, label=above:$w$] (w) at (0,5) {};

\coordinate (q1a) at (-1.5,3.8);
\coordinate (q1b) at (-2,3);
\draw[decorate, decoration={snake, amplitude=0.3mm}, very thick, blue] (w) -- (q1a) -- (q1b) -- (y1);

\coordinate (q2a) at (0,4.3);
\coordinate (q2b) at (0,3);
\draw[decorate, decoration={snake, amplitude=0.3mm}, very thick, blue] (w) -- (q2a) -- (q2b) -- (y2);

\coordinate (q3a) at (1.4,4.1);
\coordinate (q3b) at (2,3.2);
\draw[decorate, decoration={snake, amplitude=0.3mm}] (w) -- (q3a) -- (q3b) -- (y3);

\node[circle,inner sep=1.5pt, fill=blue] (q1_1) at (q1a) {};
\node[circle,inner sep=1.5pt, fill=blue] (q1_2) at (q1b) {};

\node[circle,inner sep=1.5pt, fill=blue] (q2_1) at (q2a) {};
\node (q2_2) at (q2b) {};

\node[circle,inner sep=1.5pt, fill=black] (q3_1) at (q3a) {};
\node[circle,inner sep=1.5pt, fill=black]  (q3_2) at (q3b) {};

\draw[gray] (q3_1) -- (q1_1);
\draw[gray] (q3_2) -- (q2_1);
\draw[gray] (q3_1) -- (q1_2);

\end{tikzpicture}

\caption{An illustration of the structure of the graph $H$ from \cref{lem:clique-3-path}. There are no edges between the vertices of $Q_1$ and $Q_2$, except for the edge $(y_1, y_2)$. This is indicated by highlighting these paths in blue and using thicker lines. In contrast, edges may exist between the vertices of $Q_3$ and those of $Q_1$ and $Q_2$.}\label{fig:clique+tripod}
\end{figure}
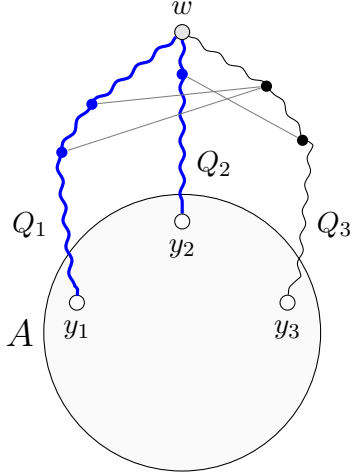

    Let $G$ be a graph isomorphic to $tS_{t,t,t}$.
    Due to \cref{lem:v-subgraph-sic}, to prove the statement it is enough to show that, for every vertex $v$ of $G$, there exists a subgraph embedding of $G$ into $H$ that is induced with respect to~$v$. 
    We do this in the rest of the proof.
    
    Let $v$ be a vertex of $G$. Let $R$ denote the connected component of $G$ that contains $v$, and consider three cases depending on the degree of $v$ in $G$.

    \textbf{Case 1: $\deg_G(v) = 3$.} In this case, we embed $R$ by mapping $v$ to the vertex $w$ of $H$, and the three paths branching from $v$, respectively, to the paths $Q_1, Q_2, Q_3$ and possibly to some vertices in $A$, if some of the branching paths in $R$ are longer than their respective paths $Q_1$, $Q_2$ and $Q_3$. Once $R$ is embedded, $A$ still contains at least $(3t^2 + t + 1) - (3t+1) > (t-1)(3t^2 + 1)$ available vertices to which the remaining $t-1$ connected components of $G$ can be mapped arbitrarily, as $A$ is a clique. Since the degree of $v$ in $G$ and the degree of its image in $H$ are the same, the embedding is trivially induced with respect to $v$. 

    \textbf{Case 2: $\deg_G(v) = 2$.} Let $H'$ be the induced subgraph of $H$ obtained by removing all vertices of $Q_3$ except $w$ from $H$.
    We will establish a subgraph embedding of $G$ into $H'$ that is induced with respect to $v$. Obviously, such an embedding is also a subgraph embedding of $G$ into $H$ and it is induced with respect to $v$.
    Let $A' = A \cap V(H')$ and note that $|A'| = |A|-1$.
    Recall that $y_1$ is the end vertex of $Q_1$ that belongs to $A'$. Let $u$ be the neighbour of $y_1$ in $Q_1$. Assumptions 5, 6 and 7 of the lemma imply that $u$ has degree $2$ in $H'$.
    
    Let $x$ be the neighbour of $v$ in $G$ on the path from $v$ to the vertex of degree $3$ in $R$, and let $P$ be the shortest path in $R$ from $v$ to a vertex of degree $1$ that does not contain $x$. 
    We embed $G$ into $H'$ as a subgraph by mapping $v$ to $u$, $x$ to $y_1$, and $P$ to the union of the paths $Q_1$ and $Q_2$ and possibly some vertices in $A'$ if this union is shorter than $P$. 
    The remaining part of $R$, i.e. $R-(V(P)\cup \{x\})$, is mapped to the available vertices in $A'$ arbitrarily.
    After embedding $R$, there are still at least $|A'| - 3t = |A|-1 -3t \geq 3t^2+t - 3t > (t-1)(3t+1)$ vertices available in $A'$, so the remaining $t-1$ connected components of $G$ can be embedded into  $A'$ vertex-disjointly from the image of $R$. 
    
    As in the previous case, the degree of $v$ in $G$ is equal to the degree of its image in $H'$, and thus the embedding is induced with respect to $v$.  

    \textbf{Case 3: $\deg_G(v) = 1$.}
    Let $u$ be the neighbour of $y_1$ in $Q_1$, and
    let $H'$ be the induced subgraph of $H$ obtained by removing from $H$ all vertices of $Q_1$, $Q_2$ and $Q_3$ except $u$ and $y_1$. Assumptions 3, 5 and 7 of the lemma imply that the degree of $u$ in $H'$ is $1$.
    As in the previous case, it is enough to establish the desired subgraph embedding of $G$ into $H'$.
    
    Let $A' = A \cap V(H')$, and note that $|A'| = |A|-2 \geq 3t^2+t-1$.
    We embed $G$ into $H'$, by mapping $v$ to $u$, the unique neighbour $x$ of $v$ in $G$ to $y_1$, and $V(G) \setminus \{v,x\}$ to $A' \setminus \{y_1\}$ arbitrarily.
    The latter is possible since $|A' \setminus \{y_1\}| \geq 3t^2+t-2 = |V(G) \setminus \{v,x\}|$.
    Since the degree of $v$ in $G$ is equal to the degree of its image in $H'$, the embedding is induced with respect to $v$.
\end{proof}

The following lemma is a corollary of \cref{lem:biclique+v-subgraph} in the case when $A$ is a clique and $B = \emptyset$.

\begin{lemma}\label[lemma]{lem:clique+v-subgraph}
    Let $H$ be a graph with vertex set $A \cup \{v\}$ such that:
    \begin{enumerate}
        \item $A$ and $\{v\}$ are disjoint;
        \item $A$ is a clique;
        \item $v$ has a neighbour and a non-neighbour in $A$.
    \end{enumerate}
    Let $F$ be a spanning subgraph of $H$ obtained by removing some edges incident to $v$.
    Then $H \sic F$.
\end{lemma}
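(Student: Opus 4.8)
The plan is to derive this statement as an immediate specialization of \cref{lem:biclique+v-subgraph}, taking the set $B$ there to be empty. The only work is to check that the four hypotheses of \cref{lem:biclique+v-subgraph} hold in this degenerate configuration, and that the notion of ``spanning subgraph obtained by removing some edges'' matches on the two sides.

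First I would instantiate \cref{lem:biclique+v-subgraph} with vertex set $A \cup \emptyset \cup \{v\}$ and verify the hypotheses one by one. Condition~(1), pairwise disjointness of $A$, $B$, $\{v\}$, reduces to disjointness of $A$ and $\{v\}$ once $B = \emptyset$, which is exactly hypothesis~(1) of the present lemma. Condition~(2), that $A$ be an independent set or a clique, is satisfied because $A$ is a clique by hypothesis~(2). Condition~(3), that $A$ and $B$ be complete to each other, is vacuously true since $B = \emptyset$. Finally, condition~(4), that $v$ have both a neighbour and a non-neighbour in $A$, is precisely hypothesis~(3). Thus all hypotheses of \cref{lem:biclique+v-subgraph} are met.

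Next I would reconcile the two descriptions of $F$. In \cref{lem:biclique+v-subgraph} the spanning subgraph $F$ is obtained by removing some edges \emph{between $v$ and vertices in $A$}, whereas here $F$ is obtained by removing some edges \emph{incident to $v$}. Since $B = \emptyset$ and $v \notin A$, every edge of $H$ incident to $v$ necessarily joins $v$ to a vertex of $A$; hence the two families of removable edges coincide, and the $F$ in the present lemma is exactly of the form permitted in \cref{lem:biclique+v-subgraph}. Applying that lemma yields $H \sic F$, as required.

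There is essentially no obstacle to this argument, as the statement is a genuine corollary rather than a new result. The only point demanding a word of justification—and the single place one could slip—is the identification of ``edges incident to $v$'' with ``edges between $v$ and $A$'', which is immediate from $B = \emptyset$; everything else is a direct substitution into the already-established \cref{lem:biclique+v-subgraph}.
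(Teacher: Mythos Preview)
Your proposal is correct and follows exactly the approach the paper takes: the paper simply states that this lemma is a corollary of \cref{lem:biclique+v-subgraph} in the case when $A$ is a clique and $B = \emptyset$, without spelling out the verification you provide. Your careful check of the hypotheses and the reconciliation of the two descriptions of $F$ are sound and in fact more detailed than what the paper gives.
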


We are now ready to prove the main result of this section, which we restate for convenience.

\cliqueToSttt*
\begin{proof}
\begin{figure}
\centering
\begin{subfigure}[t]{0.3\textwidth}
\centering
\begin{tikzpicture}[scale=0.8]

\tikzset{vertex/.style={circle, draw, fill=white, inner sep=2pt}}

\draw[fill=black!2] (0,0) circle (2.3);
\node at (-2.8,0) {\Large $A'$};

\node[vertex, label=below:$y_1$] (y1) at (-1.3,1.6) {};
\node[vertex, label=below:$y_2$] (y2) at (0,1.85) {};
\node[vertex, label=below:$y_3$] (y3) at (1.3,1.6) {};

\node[circle, draw, fill=black!10, inner sep=2pt, label=above:$w$] (w) at (0,4) {};

\draw[very thick,blue] (w) -- (y1);
\draw[very thick,blue] (w) -- (y2);
\draw (w) -- (y3);

\end{tikzpicture}
\caption{Case 1: $w$ has at least three neighbours in $A'$.}
\label{fig:Th3-case1}
\end{subfigure}
\hfill
\begin{subfigure}[t]{0.3\textwidth}
\centering
\begin{tikzpicture}[scale=0.8]

\tikzset{vertex/.style={circle, draw, fill=white, inner sep=2pt}}

\draw[fill=black!2] (0,0) circle (2.3);
\node at (-2.8,0) {\Large $A'$};

\node[vertex, label=below:$y_1$] (y1) at (-1.3,1.6) {};
\node[vertex, label=below:$y_2$] (y2) at (0,1.85) {};
\node[vertex, label=below:$y_3$] (y3) at (1.75,1) {};
\node[vertex, label=right:$z$] (z) at (2.1,1.7) {};

\node[circle, draw, fill=black!10, inner sep=2pt, label=above:$w$] (w) at (-0.5,4) {};

\draw[very thick,blue] (w) -- (y1);
\draw[very thick,blue] (w) -- (y2);
\draw (y3) -- (z);

\coordinate (q3a) at (1.4,3.8);
\coordinate (q3b) at (2,3);
\draw[decorate, decoration={snake, amplitude=0.3mm}] (w) -- (q3a) -- (q3b) -- (z);
\node at (2.3,3.3) {$P$};

\end{tikzpicture}
\caption{Case 2: $w$ has exactly two neighbours in $A'$.}
\label{fig:Th3-case2}
\end{subfigure}
\hfill
\begin{subfigure}[t]{0.3\textwidth}
\centering
\begin{tikzpicture}[scale=0.8]

\tikzset{vertex/.style={circle, draw, fill=white, inner sep=2pt}}

\draw[fill=black!2] (0,0) circle (2.3);
\node at (-2.8,0) {\Large $A'$};


\node[vertex, label=below:$y_1$] (y1) at (-1.75,0.5) {};
\node[vertex, label=left:$u$] (u) at (-2.4,1.3) {};
\node[vertex, label=below:$y_2$] (y2) at (0.5,1.85) {};
\node[vertex, label=below:$y_3$] (y3) at (1.75,1) {};

\node[circle, draw, fill=black!10, inner sep=2pt, label=above:$w$] (w) at (-1.5,3.3) {};

\coordinate (q1a) at (-2.3,2.3);
\draw[decorate, decoration={snake, amplitude=0.3mm}, very thick, blue] (w) -- (q1a) -- (u);
\draw[very thick, blue] (y1) -- (u);
\node[label=right:$P'$] at (-1.9,2.8) {};

\coordinate (q2a) at (-0.2,3.3);
\draw[decorate, decoration={snake, amplitude=0.3mm}, very thick, blue] (w) -- (q2a) -- (y2);

\coordinate (q3a) at (0.4,3.8);
\coordinate (q3b) at (1.5,3.2);
\draw[decorate, decoration={snake, amplitude=0.3mm}] (w) -- (q3a) -- (q3b) -- (y3);
\node[label=right:$P''$] at (0.6,3.9) {};

\end{tikzpicture}
\caption{Case 3: three paths from $w$ to $A'$.}
\label{Th3-case3}
\end{subfigure}

\caption{An illustration of the three cases in the proof of \cref{th:clique-to-Sttt}. Edges and paths highlighted in blue indicate that the their vertices induce a chordless cycle, thereby satisfying Assumption 7 of \cref{lem:clique-3-path}.}
\label{fig:Th3}
\end{figure}
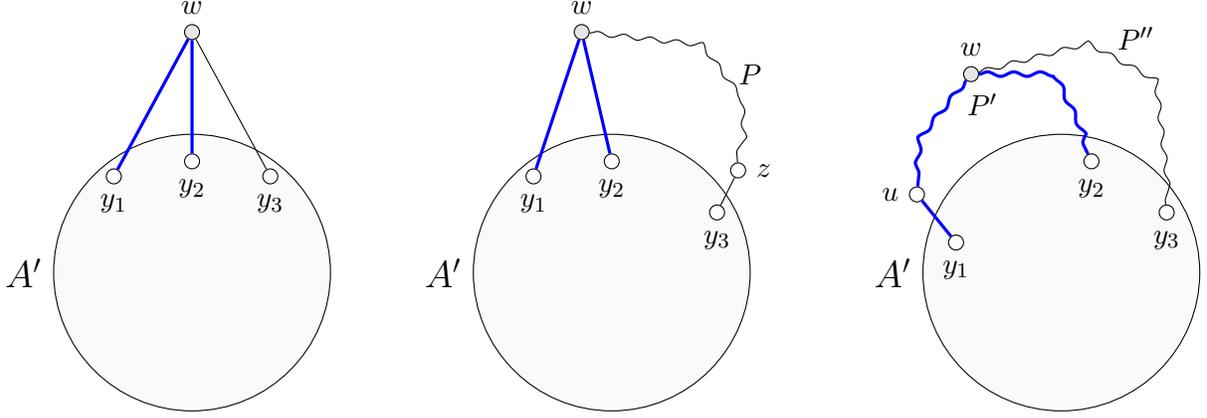
    
    Suppose, towards a contradiction, there exists a connected non-complete graph $G \in \cX_t$ with no clique cutsets of size at most $2$ that contains a clique of size at least $r$.
    Let $A'$ be a maximal clique in $G$ of size at least $r$.
    Since $G$ is non-complete and connected, there exists a vertex in $V(G) \setminus A'$ that has a neighbour in $A'$. 
    We consider three cases (see \cref{fig:Th3}).
    
    \textbf{Case 1:} \textit{There exists a vertex $w \in V(G) \setminus A'$ with at least three neighbours in $A'$.} Since $A'$ is maximal, $w$ also has a non-neighbour in $A'$. Then, by \cref{lem:clique+v-subgraph}, $G \sic H$, where $H$ is a spanning subgraph of $G[A' \cup \{w\}]$ obtained by removing all but three edges incident to $w$. Note that $H$ satisfies the conditions of \cref{lem:clique-3-path}, and thus $G \sic H \sic tS_{t,t,t}$.

    \textbf{Case 2:} \textit{Every vertex outside $A'$ has at most two neighbours in $A'$, and there exists a vertex $w \in V(G) \setminus A'$ with exactly two neighbours in $A'$}.
    Let $y_1,y_2$ be the neighbours of $w$ in $A'$. Since $G$ has no clique cutsets of size $2$, the graph $G-\{y_1,y_2\}$ is connected. Let $P$ be a shortest path from $w$ to $A' \setminus \{y_1,y_2\}$ in this graph, and let $y_3$ denote the endpoint of $P$ in $A' \setminus \{y_1,y_2\}$ and let $z$ denote the vertex on $P$ adjacent to $y_3$. 
    If $z$ has a second neighbour $y_3'$ in $A' \setminus \{y_1,y_2\}$, then we define $H = G[(A' \cup V(P)) \setminus \{y_3'\}]$; otherwise, we define $H = G[A' \cup V(P)]$. We observe that $H$ satisfies the conditions of \cref{lem:clique-3-path} with $A$ being $A' \setminus \{y_3'\}$ or $A'$ respectively, and $Q_1 = (w,y_1)$, $Q_2 = (w,y_2)$, and $Q_3 = P$. Therefore, $G \sic H \sic tS_{t,t,t}$.

    \textbf{Case 3:} \textit{Every vertex outside $A'$ has at most one neighbour in $A'$}. 
    First, we argue that there exists a path with at least three vertices whose endpoints are in $A'$ and all its internal vertices are outside $A'$. Indeed, let $u$ be a vertex outside $A'$ that has a unique neighbour in $A'$, which we denote by $y_1$. Since $G$ has no cut vertices, the graph $G-y_1$ is connected. 
    By taking a shortest path from $u$ to $A'\setminus \{y_1\}$ and extending it with the vertex $y_1$, we obtain a path $P'$ with the desired properties. 
    Without loss of generality, assume that among all such paths, the path $P'$ has the least number of vertices. This, in particular, implies that $G[V(P')]$ is a chordless cycle. 
    Let $y_2 \in A' \setminus \{y_1\}$ denote the second endpoint of $P'$. 
    
    Now, since $G$ has no clique cutsets of size $2$, the graph $G-\{y_1,y_2\}$ is connected. Let $P''$ be a shortest path between $V(P') \setminus \{y_1,y_2\}$ and $A' \setminus \{y_1,y_2\}$ in $G-\{y_1,y_2\}$. Let $w$ be the end vertex of $P''$ in $V(P') \setminus \{y_1,y_2\}$ and $y_3$ be the end vertex of $P''$ in $A' \setminus \{y_1,y_2\}$. 

    Then, the graph $H=G[A' \cup V(P') \cup V(P'')]$ satisfies the conditions of \cref{lem:clique-3-path} with $A$ being $A'$, and $Q_1$, $Q_2$ and $Q_3$ being the subpath of $P'$ from $w$ to $y_1$, the subpath of $P'$ from $w$ to $y_2$ and $P''$, respectively.
    Therefore, $G \sic H \sic tS_{t,t,t}$.
\end{proof}

\subsection{The Structure of Graphs Excluding a Tripod}

In this section, we prove \cref{th:tSttt-main}.
In addition to \cref{th:biclique-to-Sttt} and \cref{th:clique-to-Sttt}, established in the previous sections, we will require two more ingredients. 
One of them is the following implication of the characterization of bounded treewidth in finitely defined hereditary classes.

\begin{theorem}[Lozin and Razgon \cite{LR22}]\label{th:treewidth-dichotomy}
    Let $r \in \bN$, and let $\cX$ be a hereditary graph class that excludes $K_r$, $K_{r,r}$, $rS_{r,r,r}$, and $L(rS_{r,r,r})$ as induced subgraphs. Then there exists a constant $c \in \mathbb{N}$ such that $\tw(G) \leq c$ for every $G \in \cX$.
\end{theorem}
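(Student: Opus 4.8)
The plan is to prove the contrapositive in the following quantitative form: if $\tw(G)$ exceeds a threshold depending only on $r$, then $G$ contains one of $K_r$, $K_{r,r}$, $rS_{r,r,r}$, or $L(rS_{r,r,r})$ as an induced subgraph. Bounding $\tw(G)$ for every $G\in\cX$ then produces the constant $c\in\bN$. The starting point is the Grid Minor Theorem of Robertson and Seymour: there is a function $g$ with $\tw(G)\geq g(k)$ forcing a $k\times k$ grid minor. Since the $k\times k$ wall is a minor of a bounded blow-up of this grid and is \emph{subcubic}, and subcubic minors are always topological minors, $G$ contains a subgraph $W$ that is a subdivision of the $k\times k$ wall. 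I would take $k$ enormous compared with $r$.

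\textbf{Extraction inside the abstract wall.} First I would observe that the wall itself already carries the sparse obstruction. The $k\times k$ wall is triangle-free and subcubic, so every degree-$3$ vertex is the centre of an induced claw. Using that the wall is large and planar, one routes from any chosen branch vertex three induced paths of length $r$ into three pairwise far-apart regions so that they are pairwise anticomplete; this is an induced $S_{r,r,r}$. Repeating in $r$ disjoint regions of a sufficiently large wall yields an induced $rS_{r,r,r}$ in the abstract wall. Subdividing edges only lengthens legs and, since subdivision introduces only degree-$2$ vertices and no edges between distinct subdivision paths, the corresponding structure is still an induced copy of some $rS_{r',r',r'}$ inside the subgraph $W$ of $G$.

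\textbf{Transferring to $G$ and controlling chords.} The genuine difficulty is that $G[V(W)]$ may contain \emph{chords}, i.e.\ edges absent from the subdivision, which can destroy inducedness of the extracted tripods. This is where the remaining three obstructions and a Ramsey-type cleaning enter. I would classify each chord by the pair of subdivision paths it joins and analyse the local density of chords between long legs. If two long legs are joined by many chords, Ramsey's theorem together with $K_{r,r}$- and $K_r$-freeness either exposes a large biclique or clique directly, or lets me pass to long chord-free sub-legs; iterating over the boundedly many leg-pairs of each spider yields clean, pairwise-anticomplete legs and hence an induced $rS_{r,r,r}$. The crucial exceptional case is when chords appear systematically \emph{at the branch vertices}, making the three initial leg-vertices pairwise adjacent: then each spider centre becomes a triangle with three attached induced paths, an induced copy of $L(S_{r,r,r})$, and collecting $r$ of these far apart yields $L(rS_{r,r,r})$. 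This dichotomy mirrors the claw-free phenomenon: a claw-free $G$ cannot contain an induced $S_{r,r,r}$, so in that regime large walls are forced to be realised with triangulated centres, producing line-tripods instead of tripods.

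\textbf{Main obstacle.} The conceptual skeleton above is mostly bookkeeping on how subdivisions interact with the induced-subgraph relation, but the hard part is the chord-control step: organising the Ramsey cleaning so that, simultaneously across all $r$ spiders and all three legs of each, one obtains either pairwise-anticomplete long induced legs (the tripod outcome) or consistently triangulated centres (the line-tripod outcome), while the configurations that resist cleaning are exactly those certified by a large clique or biclique. Making this quantitative, by tracking how the required wall size $k$, and hence the final bound $c$, blows up through the iterated Ramsey arguments as a function of $r$, is where the real work lies.
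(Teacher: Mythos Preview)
This theorem is not proved in the paper: it is quoted verbatim as a result of Lozin and Razgon~\cite{LR22} and used as a black box in the proofs of \cref{th:tSttt-main} and \cref{th:bip-cw-dichotomy}. There is therefore no proof in the present paper to compare your attempt against.

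For what it is worth, your outline is broadly in the spirit of how such results are established (start from a large wall subdivision via the Grid Minor Theorem, then do a Ramsey-style cleaning of chords to surface one of the four obstructions), and this is indeed the route taken in~\cite{LR22}. However, your sketch is not yet a proof: the ``chord-control'' step is asserted rather than carried out, and the dichotomy you describe---either all centres are clean and you get $rS_{r,r,r}$, or all centres are triangulated and you get $L(rS_{r,r,r})$---is too coarse as stated, since a priori some spiders could be clean and others triangulated, or chords could appear mid-leg rather than at branch vertices. Turning this into an actual argument requires a more careful case analysis and iterated Ramsey bounds, which is exactly what~\cite{LR22} supplies. If you need the details, consult that reference directly; the present paper does not reproduce them.
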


The remaining ingredient is the following observation that the line graph of $tS_{q,q,q}$ can be self-intersected to $tS_{q-1,q-1,q}$.

\begin{lemma}\label[lemma]{lem:LtSttt-sic-tSttt}
    Let $t \geq 1$ and $q \geq 2$. 
    Then $L(tS_{q,q,q})\sic tS_{q-1,q-1,q}.$
\end{lemma}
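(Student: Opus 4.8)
The plan is to prove this by a completely explicit two-copy self-intersection, one that cuts exactly one edge of the triangle in the line graph while leaving every ``leg'' edge intact, and then to pass to an induced subgraph via \cref{lem:graph-sic-induced-subgraph}. It is worth noting up front why the easier route through \cref{lem:v-subgraph-sic} does \emph{not} work: the only degree-$3$ vertices of $L(S_{q,q,q})$ are the three triangle vertices, and the first vertex of the \emph{long} (length-$q$) arm of $S_{q-1,q-1,q}$ must map to a triangle vertex in every subgraph embedding, since a length-$q$ arm cannot fit inside a leg of length $q-1$. That triangle vertex is adjacent to the third triangle vertex, which cannot be dropped from the image because deleting any triangle vertex disconnects $L(S_{q,q,q})$ into a linear forest with no degree-$3$ vertex, into which $S_{q-1,q-1,q}$ cannot embed. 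Hence no embedding is induced with respect to this vertex, and a direct construction is required.

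I would first fix notation on a single component $G = L(S_{q,q,q})$: a triangle on $\{a_1,b_1,c_1\}$ together with three pendant paths $a_1a_2\cdots a_q$, $b_1b_2\cdots b_q$, $c_1c_2\cdots c_q$, each of length $q-1$. Taking $G_1 = G$ (the identity copy), the heart of the argument is a second copy $G_2 \cong G$ on the \emph{same} vertex set, obtained by ``rotating'' the triangle: let the triangle of $G_2$ be $\{a_1,b_1,a_2\}$, let the leg at $b_1$ be unchanged ($b_1b_2\cdots b_q$), route the leg at $a_1$ through $c_1$ as $a_1\,c_1\,c_2\cdots c_{q-1}$, and route the leg at $a_2$ as $a_2\,a_3\cdots a_q\,c_q$. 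A short check confirms that $G_2$ is a triangle with three legs of length $q-1$, hence $G_2 \cong L(S_{q,q,q})$.

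Next I would compute $G_1 \cap G_2$. Every leg edge of $G$ survives: the path $a_1a_2\cdots a_q$ survives because $a_1a_2$ is a triangle edge of $G_2$ and $a_2a_3,\dots,a_{q-1}a_q$ lie on the $a_2$-leg of $G_2$; the path $b_1b_2\cdots b_q$ survives since both copies carry it; and $a_1c_1,c_1c_2,\dots,c_{q-2}c_{q-1}$ survive since they form part of the $a_1$-leg of $G_2$. The triangle edge $a_1b_1$ and $a_1c_1$ survive, but $b_1c_1$ is deleted because $b_1$ is not adjacent to $c_1$ in $G_2$, and the vertex $c_q$ is isolated in the intersection (its only neighbour $c_{q-1}$ in $G_1$ and its only neighbour $a_q$ in $G_2$ disagree). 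Therefore $G_1 \cap G_2 \cong S_{q-1,q-1,q} + K_1$, with centre $a_1$ and arms $a_1a_2\cdots a_q$, $a_1b_1\cdots b_q$, $a_1c_1\cdots c_{q-1}$ of lengths $q-1,q,q-1$. Since $G_1 \cap G_2$ is a self-intersection of $G$ and contains $S_{q-1,q-1,q}$ as an induced subgraph, \cref{lem:graph-sic-induced-subgraph} and transitivity of $\sic$ yield $G \sic S_{q-1,q-1,q}$.

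Finally, for the full statement I would apply this rotation to all $t$ components of $L(tS_{q,q,q})$ simultaneously, keeping the $t$ components disjoint in $G_2$ as well; this gives $G_1 \cap G_2 \cong tS_{q-1,q-1,q} + tK_1$, and deleting the $t$ isolated vertices via \cref{lem:graph-sic-induced-subgraph} finishes the proof. The main obstacle is the design of $G_2$: one must simultaneously guarantee that $G_2$ is isomorphic to $L(S_{q,q,q})$ \emph{and} that intersecting it with $G$ removes precisely the single triangle edge $b_1c_1$. The delicate point is the placement of the ``leftover'' leaf $c_q$ onto the tail of the $a_2$-leg, which ensures it becomes isolated rather than creating a spurious surviving edge; all remaining verifications are routine edge-by-edge bookkeeping.
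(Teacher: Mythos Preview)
Your proof is correct and follows essentially the same two-copy construction as the paper: both produce a second labelled copy of $L(S_{q,q,q})$ by sliding the triangle one step along a leg and intersect it with the original to obtain $S_{q-1,q-1,q}$. The only cosmetic difference is that the paper lets the two copies have vertex sets $[3q]$ and $([3q]\cup\{0\})\setminus\{2q\}$, so the intersection is $S_{q-1,q-1,q}$ on the nose, whereas you keep the same vertex set, obtain $S_{q-1,q-1,q}+K_1$, and then invoke \cref{lem:graph-sic-induced-subgraph} to drop the isolated vertex.
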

\begin{proof}
    We give a proof of the statement for $t=1$. For larger $t$ the proof is extended in a straightforward way and we omit the details. 
    See \cref{fig:LSttt-sic-St-1t-1t} for an illustration of the proof.

    Let $H$ be the graph isomorphic to $L(S_{q,q,q})$, with vertex set $[3q]$ and the following edges:
    \begin{enumerate}
        \item $(i,i+1)$ for  $i \in [2q-1]$;
         \item $(i,i+1)$ for  $i \in \{2q+1, 2q+2, \ldots, 3q-1 \}$;
        \item $(2q+1,q)$, $(2q+1,q+1)$.
    \end{enumerate}

    Let $H'$ be the graph isomorphic to $L(S_{q,q,q})$, with vertex set $([3q]\cup \{0\}) \setminus \{2q\}$
    and the following edges:
    \begin{enumerate}
        \item $(i,i+1)$ for  $i \in \{0,1, \ldots, 2q-2\}$;
         \item $(i,i+1)$ for  $i \in \{2q+1, 2q+2, \ldots, 3q-1 \}$;
        \item $(2q+1,q-1)$, $(2q+1,q)$.
    \end{enumerate}

    It is easy to see that $H \cap H'$ is isomorphic to $S_{q-1,q-1,q}$.
\end{proof}

\begin{figure}
\centering
\begin{tikzpicture}[scale=1]
  \tikzstyle{defaultv}=[circle, draw, inner sep=1pt, minimum size=12pt, font=\scriptsize]
  \tikzstyle{defaultredv}=[circle, draw=red!70, inner sep=1pt, minimum size=12pt, font=\scriptsize]
  \tikzstyle{defaultbluev}=[circle, draw=blue!70, inner sep=1pt, minimum size=12pt, font=\scriptsize]
  \tikzstyle{intersection}=[circle, draw=gray!50!black, fill=gray!20, thick, inner sep=1pt, minimum size=12pt, font=\scriptsize]

\def\q{4}

\node[defaultredv] (v0) at (1, 0) {\scriptsize 0};
\node[intersection] (v1) at (2, 0) {\scriptsize 1};
\node[intersection] (v2) at (3, 0) {\scriptsize 2};
\node[intersection] (v3) at (4, 0) {\scriptsize 3};
\node[intersection] (v4) at (5, 0) {\scriptsize 4};
\node[intersection] (v5) at (6, 0) {\scriptsize 5};
\node[intersection] (v6) at (7, 0) {\scriptsize 6};
\node[intersection] (v7) at (8, 0) {\scriptsize 7};
\node[defaultbluev] (v8) at (9, 0) {\scriptsize 8};

\node[intersection] (v9) at (5, 1.5) {\scriptsize 9};
\node[intersection] (v10) at (5, 2.5) {\scriptsize 10};
\node[intersection] (v11) at (5, 3.5) {\scriptsize 11};
\node[intersection] (v12) at (5, 4.5) {\scriptsize 12};

\draw[blue!70, bend right=15] (v7) to (v8);
\foreach \i in {1,...,6} {
    \pgfmathtruncatemacro{\j}{\i + 1}
    \draw[line width=1.2pt, blue!70, bend right=15] (v\i) to (v\j);
}
\draw[line width=1.2pt, blue!70, bend right=15] (v9) to (v10);
\draw[line width=1.2pt, blue!70, bend right=15] (v10) to (v11);
\draw[line width=1.2pt, blue!70, bend right=15] (v11) to (v12);
\draw[line width=1.2pt, blue!70, bend left=15] (v9) to (v4);  
\draw[blue!70] (v9) to (v5);  

\draw[red!70, bend left=15] (v0) to (v1);
\foreach \i in {1,...,6} {
    \pgfmathtruncatemacro{\j}{\i + 1}
    \draw[line width=1.2pt, red!70, bend left=15] (v\i) to (v\j);
}
\draw[line width=1.2pt, red!70, bend left=15] (v9) to (v10);
\draw[line width=1.2pt, red!70, bend left=15] (v10) to (v11);
\draw[line width=1.2pt, red!70, bend left=15] (v11) to (v12);
\draw[red!70] (v9) to (v3);  
\draw[line width=1.2pt, red!70, bend right=15] (v9) to (v4);  

\end{tikzpicture}
\caption{An illustration of the proof of \cref{lem:LtSttt-sic-tSttt}. 
Both graphs $H$ (blue) and $H'$ (red) are isomorphic to $L(S_{4,4,4})$. 
Their intersection (highlighted vertices and thick double edges) is isomorphic to $S_{3,3,4}$.}

\label{fig:LSttt-sic-St-1t-1t}
\end{figure}
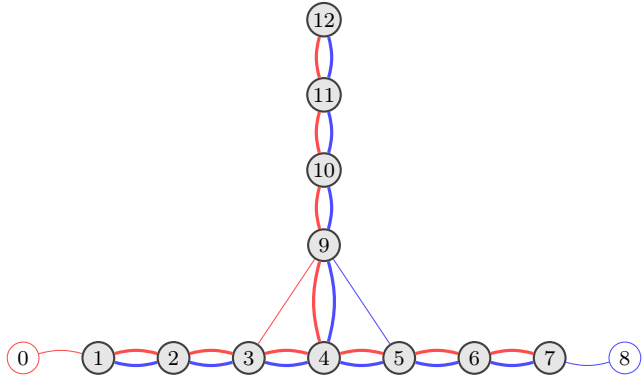

With these tools at hand we can now prove \cref{th:tSttt-main}, which we restate for convenience. 

\tStttmain*
\begin{proof}    
    Let $c \in \bN$ be a constant given by \cref{th:treewidth-dichotomy} for $r=3t^2+t+2$.
    Let $G$ be a graph in $\cX_t$ with at least two vertices, and assume that $G$ contains neither false twins nor clique cutsets of size at most $2$. If $G$ is complete, the claim holds trivially. Thus, assume that $G$ is not complete.  
    We will show that $\tw(G) \leq c$.

    First, by \cref{th:clique-to-Sttt}, graph $G$ is $K_r$-free. 
    Second, we claim that $G$ is $K_{r,r}$-free. Indeed, by \cref{th:biclique-to-Sttt}, any maximal induced complete bipartite graph in $G$ with parts of size $p,q \geq r$ would be a module in $G$, and thus any two vertices in the same part of such a complete bipartite graph would be false twins in $G$, which is not possible by our assumption.
    Finally, due to \cref{lem:LtSttt-sic-tSttt}, $G$ contains no $L(tS_{t+1,t+1,t})$, and in particular no $L(rS_{r,r,r})$, as an induced subgraph.

    Consequently, $G$ contains no $K_r$, $K_{r,r}$, $L(rS_{r,r,r})$, or $rS_{r,r,r}$ as induced subgraphs, and therefore $\tw(G) \leq c$, by \cref{th:treewidth-dichotomy}.
\end{proof}

\section{Algorithmic and Structural Implications}\label{sec:dichotomies}

In this section we provide several applications of our main structural result. In particular, we establish a number of dichotomies in finitely-defined self-intersection-closed graph classes.

To prove these dichotomies, we make use of the following hereditary graph classes.
Recall that $\cS$ denotes the class of graphs every connected component of which is a tripod.
It is known and easy to see that $\cS$ is the limit (i.e.~the intersection) of the infinite sequence of classes $\cS_k \supset \cS_{k+1}\supset \cS_{k+2}\supset\nobreak \cdots$, where $\cS_k=\SubgraphFree(C_3,\ldots,C_k,H_0,H_1,\ldots,H_k)$ for $k\ge 3$, and $H_k$ denotes the graph shown in \cref{fig:Hk}, where $H_0=K_{1,4}$. 
Note that since $\cS_k$ is monotone, it is also self-intersection-closed. 

\begin{observation}\label{obs:Sk}
For every integer $k \geq 3$, there exists an integer $k' \geq k$ such that $\mathcal{S}_{k}$ contains $\Free(C_3,\ldots,C_{k'},H_0,H_1,\ldots,H_{k'})$.
\end{observation}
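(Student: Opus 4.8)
The plan is to prove the inclusion $\Free(C_3,\ldots,C_{k'},H_0,\ldots,H_{k'}) \subseteq \mathcal{S}_k$ by its contrapositive: I will show that any graph $G$ containing some member of $\{C_3,\ldots,C_k,H_0,\ldots,H_k\}$ as a (not necessarily induced) subgraph must contain some member of $\{C_3,\ldots,C_{k'},H_0,\ldots,H_{k'}\}$ as an \emph{induced} subgraph, for a suitable $k'\ge k$. The right choice is to take $k'$ at least as large as the number of vertices of each of the finitely many graphs $H_0,\ldots,H_k$; concretely, I set $k' = \max\bigl(k,\ \max_{0\le i\le k}|V(H_i)|\bigr)$, which is finite and satisfies $k'\ge k$. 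The whole argument rests on the observation that each $H_i$ is a \emph{tree} (either the star $K_{1,4}$ or an $H$-graph with two branch vertices), so it has no chords to begin with, and on the fact that the shortest cycle of a graph is always chordless, hence induced.

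First I would dispose of the cyclic obstructions. If $G$ contains $C_j$ as a subgraph for some $j\le k$, then $G$ has a cycle and hence girth at most $j\le k\le k'$. A shortest cycle is chordless and therefore induced, so $G$ contains an induced $C_m$ with $3\le m\le k'$; thus $G\notin \Free(C_3,\ldots,C_{k'},H_0,\ldots,H_{k'})$, as required.

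The main work is the case where $G$ contains $H_i$ as a subgraph for some $0\le i\le k$. Here I would first note that we may assume the girth of $G$ exceeds $k'$: otherwise a shortest cycle again supplies an induced $C_m$ with $m\le k'$ and we are done as above. Let $S\subseteq V(G)$ be the vertex set of the embedded copy of $H_i$, so that $G[S]$ contains the tree $H_i$ as a spanning subgraph. Any additional edge (a chord) $xy$ of $G[S]$ together with the unique $x$--$y$ path inside the tree $H_i$ closes a cycle of length between $3$ and $|V(H_i)|\le k'$, contradicting the assumption that $G$ has girth greater than $k'$. Hence $G[S]$ has no chords and equals $H_i$; since $i\le k\le k'$, this is an induced copy of a graph from $\{H_0,\ldots,H_{k'}\}$, contradicting $G\in\Free(C_3,\ldots,C_{k'},H_0,\ldots,H_{k'})$.

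Combining the two cases yields the contrapositive and hence the claimed inclusion. I do not anticipate a genuine obstacle: the only points needing care are making the choice of $k'$ uniform — handled by bounding the sizes of the finitely many graphs $H_0,\ldots,H_k$ — and invoking the elementary fact that a shortest cycle is induced, so that large girth forbids \emph{all} short cycles rather than merely short induced ones. The entire effect of high girth is to force every chord of a bounded tree to create a short cycle, which is exactly what turns a subgraph copy of $H_i$ into an induced one.
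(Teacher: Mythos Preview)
Your argument is correct and is precisely the natural elementary reasoning the paper leaves implicit (the statement is labelled an ``observation'' and given without proof). Your choice of $k'=\max\bigl(k,\ \max_{0\le i\le k}|V(H_i)|\bigr)$---which here equals $k+5$---together with the shortest-cycle and tree-chord arguments is exactly what is needed.
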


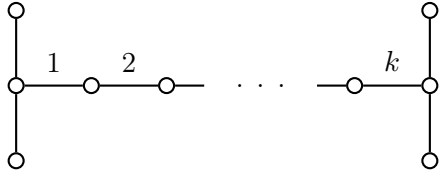
\begin{figure}[ht]
    \centering
    \begin{tikzpicture}[scale=1, every node/.style={circle, draw, fill=white, inner sep=2pt}, thick]
        \centering
        \node (L1) at (0,2) {};
        \node (L2) at (0,1) {};
        \node (L3) at (0,0) {};
        \draw (L1) -- (L2) -- (L3);
        
        \node (R1) at (5.5,2) {};
        \node (R2) at (5.5,1) {};
        \node (R3) at (5.5,0) {};
        \draw (R1) -- (R2) -- (R3);
        
        \node (M1) at (1,1) {};
        \node (M2) at (2,1) {};
        \node[draw=none, fill=none, inner sep=0pt] at (3.25,1) {. . .};
        \node (Mk) at (4.5,1) {};
        
        \draw (L2) -- (M1) node[draw=none, fill=none, inner sep=0pt, midway, above=4pt] {1};
        \draw (M1) -- (M2) node[draw=none, fill=none, inner sep=0pt, midway, above=4pt] {2};
        \draw (Mk) -- (R2) node[draw=none, fill=none, inner sep=0pt, midway, above=4pt] {$k$};
        \draw (M2) -- (2.5,1);
        \draw (4,1) -- (Mk);
    \end{tikzpicture}
    \caption{The graph $H_k$.}
    \label{fig:Hk}
\end{figure}

The importance of the classes $\cS_k$ lies in the fact that various graph parameters (e.g.~treewidth and clique-width) are unbounded and many algorithmic graph problems (e.g.~\mis and \mim) are \textup{\textsf{NP}}-hard on these classes (see, e.g.~\cite{MR4867975}). 
In the following lemma, we show that if a finitely-defined, self-intersection-closed class does not exclude any tripod, then it contains one of the classes $\cS_k$, and thus inherits their structural and algorithmic complexity.

\begin{lemma}\label[lemma]{lem:S_k-inclusion}
    Let $M$ be a finite set of graphs none of which is a tripod, i.e.~$M \cap \cS = \emptyset$. Then there exists $r \in \bN$ such that $\cS_r \subseteq \SiFree(M)$. 
\end{lemma}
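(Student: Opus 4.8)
The plan is to reduce the statement to a short monotonicity argument, exploiting two facts already available in the excerpt. First, a self-intersection of a graph is always isomorphic to a subgraph of it, so $G \sic F$ forces $F$ to embed as a subgraph of $G$. Second, each class $\cS_r = \SubgraphFree(C_3,\ldots,C_r,H_0,\ldots,H_r)$ is monotone, hence closed under taking subgraphs. Combining these, to prove $\cS_r \subseteq \SiFree(M)$ it suffices to choose $r$ so large that \emph{no} member of $M$ lies in $\cS_r$: for then, if some $G \in \cS_r$ could be self-intersected to some $F \in M$, the graph $F$ would be a subgraph of $G \in \cS_r$, and monotonicity would place $F$ back in $\cS_r$, a contradiction.

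The remaining task is therefore to pick $r$ and argue that $F \notin \cS_r$ for every $F \in M$. Here I would use that $\cS$ is the intersection of the nested sequence $\cS_3 \supset \cS_4 \supset \cdots$. Since each $F \in M$ is not a tripod, it fails to lie in $\cS = \bigcap_{k\ge 3}\cS_k$, so there is an index $k_F$ with $F \notin \cS_{k_F}$, and by nestedness $F \notin \cS_k$ for all $k \ge k_F$. As $M$ is finite, setting $r = \max_{F\in M} k_F$ yields $F \notin \cS_r$ simultaneously for all $F \in M$. If one prefers an explicit bound, one can instead observe that $\cS$ is exactly the monotone class obtained by forbidding every cycle and every $H$-graph as a subgraph; hence $F \notin \cS$ means $F$ already contains some $C_n$ or some $H_k$ as a subgraph, and such an obstruction has at most $|V(F)|$ vertices, so $F \notin \cS_r$ as soon as $r \ge |V(F)|$, and $r = \max_{F \in M}|V(F)|$ works.

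With $r$ fixed, I would finish by the contradiction sketched above: assume toward a contradiction that some $G \in \cS_r$ admits $G \sic F$ with $F \in M$; then $F$ is isomorphic to a subgraph of $G$, and since $\cS_r$ is monotone and $G \in \cS_r$, we obtain $F \in \cS_r$, contradicting the choice of $r$. Hence $G \nsic F$ for every $F \in M$, i.e.\ $G \in \SiFree(M)$; as $G$ was arbitrary, $\cS_r \subseteq \SiFree(M)$, as required.

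I do not expect a genuine obstacle here: the argument is essentially bookkeeping, and the only point requiring care is verifying $F \notin \cS_r$ for large $r$, which is immediate from $\cS = \bigcap_k \cS_k$ together with the finiteness of $M$. The conceptual crux, and the reason the lemma holds at all, is that self-intersection cannot create a subgraph that was not already present, so the forbidden-subgraph obstructions defining $\cS_r$ continue to block every $F \in M$ even as an si-subgraph; note that \Cref{obs:Sk} is not needed for this particular implication.
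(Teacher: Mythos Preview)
Your proof is correct and essentially matches the paper's argument: choose $r$ large enough that $M\cap\cS_r=\emptyset$ using $\cS=\bigcap_k\cS_k$ and the finiteness of $M$, then conclude $\cS_r\subseteq\SiFree(M)$ because $\cS_r$ is self-intersection-closed. The only cosmetic difference is that the paper invokes the closure of $\cS_r$ under self-intersection directly, whereas you unpack it via ``si-subgraph $\Rightarrow$ subgraph'' together with monotonicity of $\cS_r$.
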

\begin{proof}
    Since $M \cap \cS = \emptyset$, $\cS = \bigcap_{k \geq 3} \cS_{k}$, and $\cS_3 \supset \cS_4 \supset \cS_5 \supset \cdots$, each graph in $M$ belongs to at most finitely many classes $\cS_{k}$ (possibly to none of them). Therefore, since $M$ is finite, collectively graphs in $M$ belong to at most finitely many classes $\cS_{k}$, and hence there is a finite $r$ such that $M \cap \cS_r=\emptyset$.
    As a result, $\cS_r \subseteq \SiFree(M)$,
    since $\cS_r$ is self-intersection-closed.
\end{proof}

We now use our structural results about the classes $\cX_t = \SiFree(tS_{t,t,t})$, $t \geq 1$, to show that if $M$ contains at least one tripod, the class $\SiFree(M)$ becomes simple with respect to various structural parameters and algorithmic problems.

\subsection{Dichotomy for the \mmaybewis Problem}

\begin{theorem}\label{thm:MIS}
    Let $M$ be a finite set of graphs and let $\cY=\SiFree(M)$. 
    Then, \textup{\mis} and \textup{\mwis} are polynomial-time solvable on the class $\cY$ if $M\cap \cS \ne \emptyset$, and \textup{\textsf{NP}}-hard otherwise.
\end{theorem}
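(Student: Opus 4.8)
The plan is to establish the two directions of the dichotomy separately, leaning on \Cref{lem:S_k-inclusion} for the hardness side and on \Cref{th:tSttt-main} for the polynomial-time side.

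\medskip
\noindent\textbf{Hardness when $M \cap \cS = \emptyset$.} First I would apply \Cref{lem:S_k-inclusion}: since no graph in $M$ is a tripod, there exists $r \in \bN$ with $\cS_r \subseteq \SiFree(M) = \cY$. It is known (see the references cited around \Cref{fig:Hk}, e.g.~\cite{MR4867975}, and the classical result \cite{MR2024261}) that \mis is \textup{\textsf{NP}}-hard on each class $\cS_r$; since $\cY$ contains $\cS_r$ as a subclass, \mis and hence \mwis are \textup{\textsf{NP}}-hard on $\cY$. This direction is essentially immediate given \Cref{lem:S_k-inclusion}.

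\medskip
\noindent\textbf{Polynomial-time algorithm when $M \cap \cS \ne \emptyset$.} Here I would fix a tripod $S \in M \cap \cS$. Every tripod is an induced subgraph of $tS_{t,t,t}$ for some $t$, so $\cY = \SiFree(M) \subseteq \SiFree(S) \subseteq \SiFree(tS_{t,t,t}) = \cX_t$; it therefore suffices to solve \mwis in polynomial time on $\cX_t$. The algorithmic engine is \Cref{th:tSttt-main}, which supplies a constant $c$ such that any $G \in \cX_t$ with neither false twins nor a clique cutset of size at most $2$ is either complete or has $\tw(G) \le c$. The strategy is a standard recursive decomposition that reduces an arbitrary $G \in \cX_t$ to such ``reduced'' instances. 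Concretely, I would (i) repeatedly eliminate false twins: if $x,y$ are false twins, an optimal solution can avoid the lighter of the two, so we may delete one of them (after transferring weight appropriately, taking $w'(x)=w(x)+w(y)$ for the retained vertex, since both cannot be simultaneously excluded from consideration of a single pooled slot — more carefully, false twins form an independent module, so an optimal independent set either includes all of them or we can reduce to keeping the heaviest, and the module can be contracted); and (ii) decompose along clique cutsets of size at most $2$ using the classical clique-cutset technique of Tarjan: \mwis combines over the ``atoms'' of a clique-cutset decomposition with only a bounded-size correction on the separating clique, and this whole decomposition is computable in polynomial time. Since $\cX_t$ is hereditary, all graphs produced by these reductions remain in $\cX_t$, so \Cref{th:tSttt-main} applies to every atom.

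\medskip
\noindent\textbf{Solving the reduced instances.} Each reduced atom is, by \Cref{th:tSttt-main}, either complete or of treewidth at most $c$. On a complete graph \mwis is trivial (pick the single heaviest vertex). On a graph of treewidth at most the fixed constant $c$, \mwis is solvable in polynomial (indeed linear) time by the standard bounded-treewidth dynamic programming, after computing a tree decomposition of bounded width in polynomial time. Recombining the atomic solutions through the clique-cutset decomposition tree and undoing the twin reductions yields an optimum for the original $G$, all within polynomial time.

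\medskip
\noindent The step I expect to require the most care is the bookkeeping in the reduction phase — specifically, verifying that the false-twin elimination correctly preserves the optimum in the \emph{weighted} setting, and that the clique-cutset decomposition composes \mwis optimally across atoms while sharing the (at most two) cutset vertices without double-counting or missing the constraint that an independent set uses at most one vertex of the separating clique. These are routine but must be stated precisely; the genuine structural content is entirely delegated to \Cref{th:tSttt-main}.
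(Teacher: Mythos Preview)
Your approach is essentially the same as the paper's: decompose, then invoke the structural theorem to get bounded treewidth on the base pieces. The paper differs only in packaging: rather than false-twin elimination plus clique cutsets of size at most~$2$ followed by \Cref{th:tSttt-main}, it invokes the combined modular-decomposition/clique-separator framework of~\cite{MR4586793} to reduce directly to \emph{prime} graphs with \emph{no} clique cutsets, and then applies \Cref{th:biclique-to-Sttt,th:clique-to-Sttt}, \Cref{lem:LtSttt-sic-tSttt} and \Cref{th:treewidth-dichotomy} individually. Using the off-the-shelf framework buys the paper a cleaner reduction (no need to worry about interleaving the two kinds of reductions, since atoms of a clique-cutset decomposition can acquire new false twins), whereas your route is more tightly matched to the exact hypotheses of \Cref{th:tSttt-main}.

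One point to tighten: your description of the false-twin reduction is garbled. The sentence ``an optimal solution can avoid the lighter of the two'' is the rule for \emph{true} twins; for \emph{false} twins $x,y$ (non-adjacent, identical open neighbourhoods) an optimal independent set contains both or neither, so the correct reduction is exactly the contraction with $w'(x)=w(x)+w(y)$ that you eventually state. Also make explicit that the two reductions must be iterated (or, as the paper does, appeal to a known combined framework), since decomposing along a small clique cutset can create false twins in an atom, and you need each final atom to satisfy both hypotheses of \Cref{th:tSttt-main} simultaneously.
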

\begin{proof}
    Suppose first that $M \cap \cS = \emptyset$. 
    Then, by \cref{lem:S_k-inclusion}, $\cY$ contains $\cS_r$ for some $r \geq 3$. 
    This implies that \mis (and therefore \mwis) is \textup{\textsf{NP}}-hard in $\cY$, because it is \textup{\textsf{NP}}-hard in $\cS_r$ for all $r \geq 3$ as a consequence of \cite[Proposition~2]{MR3695266} and \Cref{obs:Sk}.
    
    Now suppose that $M$ contains a graph $H$ from $\cal S$. Without loss of generality, we may assume that $H=tS_{t,t,t}$ for some $t \in \bN$. Let $G$ be an arbitrary graph in $\SiFree(tS_{t,t,t})$. 
    We may assume that $G$ is not complete, as otherwise \mwis is trivial on $G$.
    Combining modular decomposition and decomposition by clique separators as in \cite{MR4586793}, we may further assume that $G$ has no modules other than $V(G)$ and $\{v\}$, $v\in V(G)$, and has no clique cutsets. 
    Then, from \cref{th:biclique-to-Sttt,th:clique-to-Sttt}, we conclude that $G$ contains neither large bicliques nor large cliques as induced subgraphs. 
    It also does not contain the line graph of  $tS_{t+1,t+1,t+1}$, because this graph can be self-intersected to $tS_{t,t,t}$ by \cref{lem:LtSttt-sic-tSttt}. 
    Therefore, due to \cref{th:treewidth-dichotomy}, the treewidth of $G$ is bounded. 
    Consequently, by~\cite{MR1105479,MR1417901}, the \mwis problem can be solved in polynomial time on graphs from~$\cY$. 
\end{proof}

In the rest of this section, we use $\cB$ to denote the class of bipartite graphs.

\subsection{Clique-width Dichotomy on Bipartite Graphs}

\begin{theorem}\label{th:bip-cw-dichotomy}
    Let $M$ be a finite set of bipartite graphs and let $\cY = \SiFree(M) \cap \cB$. Then $\cY$ has bounded clique-width if and only if $M\cap \cS \ne \emptyset$.
\end{theorem}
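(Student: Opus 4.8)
The plan is to prove the two directions of the biconditional separately, exploiting the structural dichotomy of \cref{th:tSttt-main} for the ``if'' direction and the classes $\cS_r$ for the ``only if'' direction. Note that the statement here is about clique-width rather than treewidth, so I cannot simply quote \cref{th:tSttt-main}; instead I must pass from bounded treewidth (where it applies) to bounded clique-width and, crucially, handle the large cliques and bicliques that \cref{th:tSttt-main} permits.

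\medskip
\noindent\textbf{The easy direction ($M \cap \cS = \emptyset$).} First I would handle the case where $M$ contains no tripod. By \cref{lem:S_k-inclusion} there is some $r \geq 3$ with $\cS_r \subseteq \SiFree(M)$. Since every graph in $\cS_r$ is a forest (it is $C_3,\ldots,C_r$-free and has maximum degree constraints), $\cS_r$ is a class of bipartite graphs, so $\cS_r \subseteq \SiFree(M) \cap \cB = \cY$. It is well known (see the references cited around \cref{fig:Hk}, e.g.~\cite{MR4867975}) that clique-width is unbounded on $\cS_r$; combined with \Cref{obs:Sk} this gives that $\cY$ has unbounded clique-width. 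This establishes that bounded clique-width forces $M \cap \cS \ne \emptyset$.

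\medskip
\noindent\textbf{The hard direction ($M \cap \cS \ne \emptyset$).} Suppose $M$ contains a tripod, which without loss of generality we take to be $tS_{t,t,t}$, so $\cY \subseteq \cX_t \cap \cB$. I would like to bound the clique-width of $\cY$. Because clique-width is preserved under modular decomposition and under decomposition by clique cutsets (up to an additive or bounded multiplicative constant — the standard facts used in \cite{MR4586793} and elsewhere), it suffices to bound the clique-width of the ``atoms'' of $G \in \cY$: the prime (modular-decomposition-quotient) induced subgraphs that additionally have no clique cutsets. Equivalently, I may assume $G$ has no nontrivial modules (hence in particular no false twins) and no clique cutset of size at most $2$. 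Now apply \cref{th:tSttt-main}: such a $G$ is either complete or has treewidth at most $c$. A complete graph is bipartite only if it has at most $2$ vertices, so in the bipartite setting the ``complete'' alternative is trivial. In the remaining case $\tw(G) \le c$, and since bounded treewidth implies bounded clique-width (a standard result, e.g.~the bound $\mathrm{cw}(G) \le 3 \cdot 2^{\tw(G)-1}$), the atom $G$ has clique-width bounded by a constant depending only on $t$. Reassembling through the clique-cutset and modular decompositions then yields a global bound on $\mathrm{cw}$ for all of $\cY$.

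\medskip
\noindent\textbf{Where the difficulty lies.} The main obstacle is making the decomposition argument honest: I must verify that restricting to bipartite graphs does not disturb the reductions, and in particular that reducing to atoms with no false twins and no small clique cutsets is legitimate for clique-width (not just for treewidth or for \mwis as in \Cref{thm:MIS}). Clique cutsets of size at most $2$ in a bipartite graph are singletons or single edges, and clique-decomposition along these changes clique-width only boundedly, which is fine; the module reduction likewise only needs the quotient-plus-factors bound for clique-width. The subtle point is that \cref{th:tSttt-main} guarantees the ``complete or bounded-treewidth'' dichotomy \emph{only} after removing false twins and small clique cutsets — so I must confirm these removals are exactly the operations under which clique-width is controlled, and that a bipartite atom satisfying the hypotheses genuinely falls into the bounded-treewidth case (since large complete or complete bipartite subgraphs, which $\cX_t$ would otherwise allow, are excluded precisely because a maximal large biclique would be a module by \cref{th:biclique-to-Sttt}, contradicting primality). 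Once these bookkeeping steps are pinned down, the constant bound follows and the theorem is proved.
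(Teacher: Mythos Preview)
Your overall strategy for the ``if'' direction differs from the paper's and carries a genuine gap. The paper does \emph{not} go through \cref{th:tSttt-main}; it reduces only to biconnected graphs (using \cite[Proposition~2]{MR2112498}) and then applies \cref{th:biclique-to-Sttt} directly: a biconnected bipartite graph in $\cX_t$ either contains a large induced biclique, in which case the maximal one is a module and bipartiteness forces the whole graph to be a biclique (clique-width~$2$), or it is $K_{p,p}$-free for $p=3t^2+t+1$, hence (being also $K_3$-free, $tS_{t,t,t}$-free, and $L(S_{1,1,1})$-free) of bounded treewidth by \cref{th:treewidth-dichotomy}. No modular decomposition and no edge-cutset decomposition are needed.

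By routing through \cref{th:tSttt-main} you are forced to manufacture atoms that are \emph{simultaneously} prime and free of clique cutsets of size at most~$2$, and here your justification breaks down. First, the reference you cite for ``clique-cutset decomposition preserves bounded clique-width'', namely \cite{MR4586793}, is about \textsc{MWIS} algorithms, not about clique-width; the edge-cutset case (size-$2$ clique cutsets in bipartite graphs) is not covered by \cite{MR2112498}, which handles only cut-vertices. Second, and more seriously, the two decompositions do not compose cleanly: the pieces of a prime graph after an edge-cutset split need not be prime, and conversely prime quotients of a biconnected graph need not be biconnected, so iterating the two reductions can require unboundedly many rounds with no a priori control on how the clique-width constant grows. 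This is precisely why the paper sidesteps \cref{th:tSttt-main} here: its extra hypothesis (no clique cutset of size~$2$) comes from \cref{th:clique-to-Sttt}, which handles large cliques---irrelevant in bipartite graphs---whereas \cref{th:biclique-to-Sttt} already works under mere biconnectedness.

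There is also a small error in the easy direction: $\cS_r$ is \emph{not} a class of forests and is \emph{not} contained in $\cB$; it only forbids cycles of length at most~$r$, so e.g.\ $C_{r+1}\in\cS_r$. You should argue (as the paper does) that $\cS_r\cap\cB\subseteq\cY$ and that the bipartite graphs in $\cS_r$ already have unbounded clique-width.
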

\begin{proof}
    Suppose that $M \cap \cS = \emptyset$. Then, by \cref{lem:S_k-inclusion}, $\cY$ contains $\cS_k\cap \cB$ for some $k \geq 3$. 
    This implies that $\cY$ has unbounded clique-width, because bipartite graphs in $\cS_k$ have unbounded clique-width for all $k \geq 3$ 
    (see, e.g.~\cite{MR2195310}).

    Now suppose that $M \cap \cS \ne \emptyset$.
    Then, for some constant $t \in \bN$, it follows that $\cY \subseteq \cX_t \cap \cB$.
    By \Cref{lem:graph-sic-induced-subgraph}, graphs in $\cY$ do not contain $tS_{t,t,t}$ as an induced subgraph. 
    Furthermore, since graphs in $\cY$ are bipartite, they do not contain $K_3$, and therefore do not contain $L(S_{1,1,1})$, the line graph of $S_{1,1,1}$. 
    Now, if a biconnected graph $G$ in $\cY$ contains an induced biclique $K_{p,q}$ with $p,q \geq 3t^2 + t + 1$, then, by \cref{th:biclique-to-Sttt}, the vertex set of any maximal extension $H$ of this biclique is a module in $G$.
    This implies that $G= H$ is a biclique, since otherwise any vertex in $V(G)\setminus V(H)$ that has a neighbour in $H$ is complete to $V(H)$ and would hence lead to a $K_3$ in $G$, contradicting the fact that $G$ is bipartite. 
    Thus, any biconnected graph in $\cY$ is either a biclique or belongs to $\Free(tS_{t,t,t}, L(S_{1,1,1}), K_3, K_{p,p})$, where $p = 3t^2 + t + 1$. 
    Since the latter class has bounded treewidth (by \cref{th:treewidth-dichotomy}), and bicliques have clique-width $2$, we conclude that the class of biconnected graphs in $\cY$ has bounded clique-width. 
    The result now follows from the fact that, in any hereditary graph class, boundedness of clique-width in the subclass of biconnected graphs implies boundedness of clique-width in the entire class~\cite[Proposition 2]{MR2112498}.
\end{proof}

\subsection{Dichotomy for \maybecsat}

The \emph{incidence graph of a CNF formula} $\varphi$ is a bipartite graph $G_\varphi$, one part of which represents clauses and the other represents variables. The edges of $G_\varphi$ connect variables to the clauses containing them (positively or negatively). 

\begin{theorem}\label{thm:SAT}
    Let $M$ be a finite set of bipartite graphs and let $\cY = \SiFree(M) \cap \cB$. 
    \begin{enumerate}[(1)]
        \item\label{satNP-c} If $M\cap {\cal S} = \emptyset$, then the \textup{\sat} problem is \textup{\textup{\textsf{NP}}}-complete on instances with incidence graphs in $\cY$.
        
        \item If $M\cap {\cal S} \ne \emptyset$, then the \textup{\csat} problem is polynomial-time solvable on instances with incidence graphs in $\cY$.
    \end{enumerate}
\end{theorem}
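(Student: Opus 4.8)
The plan is to prove the two parts separately, mirroring the dichotomies already established in \Cref{thm:MIS,th:bip-cw-dichotomy} and reducing each direction to machinery that is either proved earlier in the paper or known from the literature. Since incidence graphs are by definition bipartite, I may work entirely inside $\cB$, and the crucial point is that $\cY = \SiFree(M)\cap\cB$ is \emph{exactly} the class to which \Cref{th:bip-cw-dichotomy} applies, because $M$ is a finite set of bipartite graphs.

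For the hardness part~(\ref{satNP-c}), I would argue precisely as in the hardness direction of \Cref{thm:MIS}. Assuming $M\cap\cS=\emptyset$, \Cref{lem:S_k-inclusion} supplies an integer $r\ge 3$ with $\cS_r\subseteq\SiFree(M)$, and intersecting with $\cB$ gives $\cS_r\cap\cB\subseteq\cY$. It is known that \sat\ is \textup{\textsf{NP}}-complete already when restricted to instances whose incidence graphs lie in $\cS_r$ for every $r\ge 3$ (via \cite{MR3033644} together with \Cref{obs:Sk}, in the same manner \cite{MR3695266} was used for \mis). Such incidence graphs are bipartite, hence belong to $\cS_r\cap\cB\subseteq\cY$; combined with the trivial membership $\sat\in\textup{\textsf{NP}}$, this yields \textup{\textsf{NP}}-completeness of \sat\ on instances with incidence graphs in $\cY$.

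For the tractability part, I would appeal directly to \Cref{th:bip-cw-dichotomy}: when $M\cap\cS\ne\emptyset$, that theorem gives that $\cY$ has bounded clique-width, so there is a constant $w$ depending only on $M$ such that every incidence graph in $\cY$ has clique-width at most $w$. I would then invoke the known polynomial-time algorithm for \csat\ on CNF formulas whose (unsigned) incidence graph has bounded clique-width: given a formula together with a clique-width expression of bounded width for its incidence graph, the number of satisfying assignments can be computed in polynomial time, and such an expression can be computed (or suitably approximated) in polynomial time since the clique-width is bounded by a constant. Applying this to each instance with incidence graph in $\cY$ delivers the desired polynomial-time algorithm for \csat.

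Thus both parts are essentially corollaries of \Cref{th:bip-cw-dichotomy} and of existing reductions, so the heavy lifting has already been done. The one genuinely delicate point I would be careful about is the tractability ingredient: one must use a model-counting algorithm that runs in polynomial time given bounded clique-width of the \emph{plain} incidence graph (as defined here, without sign labels), rather than the stronger hypothesis of bounded clique-width of the signed incidence graph. Making sure the cited counting result is stated for the unsigned incidence graph, and that a clique-width expression of bounded width is obtainable in polynomial time, is the only step requiring real attention. A secondary point to verify in part~(\ref{satNP-c}) is that the reduction behind \cite{MR3033644} indeed produces instances whose incidence graphs fall inside the hereditary classes $\Free(C_3,\dots,C_{k'},H_0,\dots,H_{k'})$ captured by \Cref{obs:Sk}, so that they lie in $\cS_r\cap\cB\subseteq\cY$.
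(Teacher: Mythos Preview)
Your proposal is correct and follows essentially the same approach as the paper: for (1) apply \Cref{lem:S_k-inclusion} (together with \Cref{obs:Sk} and \cite{MR3033644}) to get $\cS_r\cap\cB\subseteq\cY$ and inherit \textup{\textsf{NP}}-completeness, and for (2) invoke \Cref{th:bip-cw-dichotomy} to obtain bounded clique-width and then the model-counting algorithm of~\cite{SS13}. Your caveats about the unsigned incidence graph and about the reduction landing in the right class are well placed but are indeed handled by the cited results.
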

\begin{proof}
    Suppose that $M \cap \cS = \emptyset$. Then, by \cref{lem:S_k-inclusion}, $\cY$ contains $\cS_k \cap \cB$ for some $k \geq 3$. This immediately implies (1), as the \sat problem is \textup{\textsf{NP}}-complete on instances with incidence graphs in the class $\cS_k \cap \cB$ for any $k\ge 3$, as a consequence of \cite[Lemma~3]{MR3033644} and \Cref{obs:Sk}. 

    Now suppose that $M \cap \cS \ne \emptyset$. Then, by \cref{th:bip-cw-dichotomy}, the class $\cY$ has bounded clique-width, and therefore, according to \cite{SS13}, the \csat problem can be solved in polynomial time on instances with incidence graphs in $\cY$. 
\end{proof}

\subsection{Dichotomy for the \mim problem on Bipartite Graphs}

\begin{theorem}\label{th:mim-dichotomy}
    Let $M$ be a finite set of bipartite graphs and let $\cY=\SiFree(M) \cap \cB$. 
    Then the \textup{\mim} problem is polynomial-time solvable on the class $\cY$ if $M\cap \cS \ne \emptyset$, and \textup{\textsf{NP}}-hard otherwise.
\end{theorem}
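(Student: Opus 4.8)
The plan is to mirror the structure of the three dichotomy proofs that precede it (Theorems \ref{thm:MIS}, \ref{th:bip-cw-dichotomy}, and \ref{thm:SAT}), splitting into the two cases $M \cap \cS = \emptyset$ and $M \cap \cS \neq \emptyset$. For the hardness direction, suppose $M \cap \cS = \emptyset$. By \cref{lem:S_k-inclusion}, $\cY$ contains $\cS_k \cap \cB$ for some $k \geq 3$. It is known (see \cite{MR2363374} and the surrounding literature) that \mim is \textup{\textsf{NP}}-hard on bipartite graphs in $\cS_k$ for every $k \geq 3$; combining this with \Cref{obs:Sk} gives \textup{\textsf{NP}}-hardness of \mim on $\cY$.

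For the tractable direction, suppose $M \cap \cS \neq \emptyset$. Then, exactly as in the proof of \cref{th:bip-cw-dichotomy}, the class $\cY$ has bounded clique-width. The plan is to reduce \mim to a problem expressible in monadic second-order logic (specifically $\mathrm{MSO}_1$) and invoke Courcelle-type metatheorems for bounded clique-width. Concretely, \mim on a graph $G$ is equivalent to \mwis on the \emph{square of the line graph} $L(G)^2$ (with unit weights), since induced matchings in $G$ correspond bijectively to independent sets in $L(G)^2$. The cleaner route, however, is to observe that ``$M$ is an induced matching'' is an $\mathrm{MSO}_1$-expressible property of the edge set, so that finding a maximum induced matching is an optimization problem over an $\mathrm{MSO}_1$-definable predicate; by the algorithmic metatheorem of Courcelle, Makowsky and Rotics, such problems are solvable in polynomial (indeed linear, given an expression) time on any class of bounded clique-width. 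Since a clique-width expression of bounded width for graphs in $\cY$ can be computed in polynomial time, this yields a polynomial-time algorithm for \mim on $\cY$.

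I would therefore assemble the argument in the following order: (i) handle the hardness case via \cref{lem:S_k-inclusion} and \Cref{obs:Sk} together with the known \textup{\textsf{NP}}-hardness of \mim on $\cS_k \cap \cB$; (ii) invoke \cref{th:bip-cw-dichotomy} to obtain boundedness of clique-width in the tractable case; and (iii) conclude tractability by expressing \mim as an $\mathrm{MSO}_1$ optimization problem and applying the bounded-clique-width metatheorem, exactly parallel to how \cref{thm:SAT} used the \csat algorithm of \cite{SS13}.

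The main obstacle I anticipate is the algorithmic step (iii): one must verify that \mim is genuinely captured by an $\mathrm{MSO}_1$-definable optimization problem over clique-width expressions. The subtlety is that \mim concerns a set of \emph{edges}, whereas the clique-width metatheorems most naturally quantify over \emph{vertex} sets ($\mathrm{MSO}_1$). The safest resolution is to reduce to \mwis on a suitable derived graph: map each graph $G \in \cY$ to $L(G)^2$, note that $L(G)^2$ has clique-width bounded by a function of the clique-width of $G$ (since both line-graph and power operations preserve bounded clique-width, by results of Gurski–Wanke and Todinca), and then apply the already-cited bounded-clique-width algorithm for \mwis. Verifying that this composition of operations keeps clique-width bounded, and that the reduction is polynomial-time computable, is the crux of the proof; everything else follows the template of the preceding theorems.
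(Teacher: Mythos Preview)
Your overall architecture matches the paper exactly: \cref{lem:S_k-inclusion} plus \Cref{obs:Sk} plus known hardness of \mim on $\cS_k\cap\cB$ for the hard side, and \cref{th:bip-cw-dichotomy} plus a bounded-clique-width algorithm for \mim on the tractable side. The paper simply cites \cite{MR1739644,MR2232389} for the latter step, i.e.~it takes the $\mathrm{MSO}_1$ route you call ``cleaner'' and does not go through $L(G)^2$.

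There is, however, a genuine gap in what you designate the ``safest'' route. The line-graph operation does \emph{not} preserve bounded clique-width: $K_{n,n}$ has clique-width~$2$, but $L(K_{n,n})$ is the $n\times n$ rook's graph, whose clique-width grows with~$n$. More generally, Gurski and Wanke proved that $\{L(G): G\in\mathcal{C}\}$ has bounded clique-width if and only if $\mathcal{C}$ has bounded \emph{treewidth}, and the classes $\cY$ here need not have bounded treewidth (they can contain arbitrarily large bicliques). So the $L(G)^2$ reduction cannot be salvaged in this setting.

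Conversely, your worry about the $\mathrm{MSO}_1$ route is unfounded. An induced matching is determined by its vertex set: a set $U\subseteq V(G)$ is the vertex set of an induced matching precisely when $G[U]$ is $1$-regular, and this is a first-order (hence $\mathrm{MSO}_1$) property of $U$. Maximising $|U|$ over such sets is therefore a LinEMSOL$_1$ problem, handled directly by \cite{MR1739644}, with \cite{MR2232389} supplying the required clique-width expression in polynomial time. Drop the $L(G)^2$ detour and your argument coincides with the paper's.
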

\begin{proof}
    First suppose that $M \cap \cS = \emptyset$. Then, by \cref{lem:S_k-inclusion}, for some $k \geq 3$, the class $\cY$ contains $\cS_k \cap \cB$. 
    This implies that the \mim problem is \textup{\textsf{NP}}-hard on $\cY$, because it is \textup{\textsf{NP}}-hard on $\cS_k \cap \cB$ for all $k \geq 3$ as a consequence of~\cite[Corollary~1]{Loz02} and \Cref{obs:Sk}.

    Now suppose that $M \cap \cS \ne \emptyset$. Then, by \cref{th:bip-cw-dichotomy}, the class $\cY$ has bounded clique-width, and therefore the \mim problem can be solved in polynomial time on $\cY$ (see, e.g.~\cite{MR1739644,MR2232389}). 
\end{proof}

\iftoggle{anonymous}{
}{%
	\bigskip
	\textbf{Acknowledgments.} This work was initiated while the authors were staying at the birthplace of James Clerk Maxwell in Edinburgh for a Research-in-Groups visit supported by the International Centre for Mathematical Sciences.
}

\bibliographystyle{abbrv}

\end{document}